\definecolor{red}{rgb}{1,0,0}
\newcommand{\vvirg}{ , \dots , }
\newcommand{\ootimes}{ \otimes \cdots \otimes }
\newcommand{\contract}{\rotatebox[origin=c]{180}{ \reflectbox{$\neg$} }}
\newcommand{\bfJ}{\mathbf{J}}
\newcommand{\calH}{\mathcal{H}}
\newcommand{\calO}{\mathcal{O}}
\newcommand{\calR}{\mathcal{R}}
\newcommand{\calS}{\mathcal{S}}
\newcommand{\bbC}{\mathbb{C}}
\newcommand{\bbN}{\mathbb{N}}
\newcommand{\bbP}{\mathbb{P}}
\newcommand{\bbS}{\mathbb{S}}
\newcommand{\frakS}{\mathfrak{S}}
\newcommand{\frakg}{\mathfrak{g}}
\renewcommand{\phi}{\varphi}
\newcommand{\eps}{\varepsilon}
\newcommand{\dashto}{\dashrightarrow}
\renewcommand{\bar}[1]{\overline{#1}}
\newcommand{\id}{\mathrm{id}}
\DeclareMathOperator{\codim}{codim}
\DeclareMathOperator{\End}{End}
\DeclareMathOperator{\Sym}{Sym}
\newcommand{\SO}{\mathrm{SO}}
\newcommand{\bfVec}{\mathbf{Vec}}
\newcommand{\bfVar}{\mathbf{Var}}
\newcommand{\Sing}{\mathrm{Sing}}
\newcommand{\slrk}{\mathrm{slrk}}
\newcommand{\str}{\mathrm{str}}
\newcommand{\ustr}{\underline{\mathrm{str}}}
\newcommand{\Gr}{\mathrm{Gr}}
\DeclareMathAccent{\wtilde}{\mathord}{largesymbols}{"65}
\newcommand{\GL}{\mathrm{GL}}
\newcommand{\SL}{\mathrm{SL}}
\newcommand{\fraksl}{\mathfrak{sl}}
\newcommand{\frakgl}{\mathfrak{gl}}
\newcommand{\frakso}{\mathfrak{so}}
\newcommand{\diff}{\mathrm{d}}
\DeclareMathOperator{\Stab}{Stab}
\numberwithin{equation}{section}
\newtheorem{theorem}[equation]{Theorem}
\newtheorem{lemma}[theorem]{Lemma}
\newtheorem{proposition}[theorem]{Proposition}
\newtheorem{corollary}[theorem]{Corollary}
\newtheorem{conjecture}[theorem]{Conjecture}
\theoremstyle{definition}
\newtheorem{definition}[theorem]{Definition}
\newtheorem*{question*}{Question}
\newenvironment{remark}
  {\pushQED{\qed}\remarkx}
  {\popQED\endremarkx}
\crefname{theorem}{theorem}{theorems}
\Crefname{theorem}{Theorem}{Theorems}
\crefname{lemma}{lemma}{lemmas}
\Crefname{lemma}{Lemma}{Lemmas}
\crefname{proposition}{proposition}{propositions}
\Crefname{proposition}{Proposition}{Propositions}
\crefname{corollary}{corollary}{corollaries}
\Crefname{corollary}{Corollary}{Corollaries}
\crefname{definition}{definition}{definitions}
\Crefname{definition}{Definition}{Definitions}
\crefname{remark}{remark}{remarks}
\Crefname{remark}{Remark}{Remarks}
\crefname{remarkx}{remark}{remarks}
\Crefname{remarkx}{Remark}{Remarks}
\crefname{example}{example}{examples}
\Crefname{example}{Example}{Examples}
\crefname{examplex}{example}{examples}
\Crefname{examplex}{Example}{Examples}
\title[Small slice rank and strength]{Polynomials of small slice rank and strength}
\author[C. Flavi]{Cosimo Flavi}
\author[F. Gesmundo]{Fulvio Gesmundo}
\author[A. Oneto]{Alessandro Oneto}
\author[E. Ventura]{Emanuele Ventura}
\address[C. Flavi]{Wydział Matematyki, Informatyki i Mechaniki, Uniwersytet Warszawski, ul.~Stefana Banacha 2, 02-097 Warsaw, Poland.}
\email{c.flavi@uw.edu.pl}
\address[F. Gesmundo]{Institut de Mathématiques de Toulouse; UMR5219 -- Université de Toulouse; CNRS -- UPS, F-31062 Toulouse Cedex 9, France}
\email{fgesmund@math.univ-toulouse.fr}
\address[A. Oneto]{Università degli Studi di Trento, Dipartimento di Matematica, Via Sommarive 14, 38123 – Povo (Trento), Italy}
\email{alessandro.oneto@unitn.it}
\address[E. Ventura]{Politecnico di Torino, Dipartimento di Scienze Matematiche ``G. L. Lagrange'', Corso Duca degli Abruzzi 24, 10129 Torino, Italy}
\email{emanuele.ventura@polito.it}
\keywords{strength, slice rank, secant varieties, reducible forms, polynomial functors}
\subjclass[2020]{primary 14N07, secondary 15A69, 14R20, 14M12}
\begin{document}

\begin{abstract}
This paper investigates defining equations for secant varieties of the variety of reducible polynomials, which geometrically encode the notions of strength and slice rank of homogeneous polynomials. We present three main results. First, we reinterpret Ruppert's classical equations for reducible ternary forms in the language of representation theory and we extend them to an arbitrary number of variables. Second, we construct new determinantal equations for polynomials of small strength based on syzygies of their partial derivatives. Finally, we establish a reduction theorem for cubic forms, proving that slice rank two is determined by generic linear sections in $14$ variables; this gives one of the few explicit upper bounds for defining equations for the image of a polynomial map in the framework of noetherianity for polynomial functors.
\end{abstract}

\maketitle

\section{Introduction}

The study of decompositions of polynomials into simple summands is a central theme in algebraic geometry with wide-ranging applications. In this work, we study complex algebraic varieties of homogeneous polynomials admitting additive decompositions in terms of reducible polynomials, called \textit{strength decompositions} and, in the special case of linear factors, \textit{slice rank decompositions}. The geometry of these decompositions plays a role in the study of complete intersections contained in hypersurfaces \cite{CCG08}, and is central in the resolution of Stillman's conjecture \cite{AnHoc:StillmanConj} and the study of singular loci \cite{KZ18}. Moreover, the geometry of slice rank is essentially equivalent to the geometry of the Fano scheme of hypersurfaces, an object classically studied in algebraic geometry \cite{DebMan,DerkEggSnow}. In infinite dimensional algebraic geometry, strength plays a crucial role: in a way, it is a universal measure for the expressive power of polynomials and tensors \cite{BDK19,BDDE22}. In algebraic complexity theory, a restricted version of strength is used in \cite{GGIL22} as a coarsening of the algebraic branching program width of polynomials, and provided new methods for lower bounds using intersection theory and Noether-Lefschetz theory.

The \emph{strength} of a homogeneous polynomial $f$ is the smallest $r$ for which there is an expression
\[
f = g_1 h_1 + \cdots + g_rh_r
\]
of $f$ as sum of reducible homogeneous polynomials; here $g_i,h_i$ are homogeneous of degree strictly smaller than $\deg (f)$. The \emph{slice rank} of $f$ is the smallest $r$ for which there is an expression
\[
f = \ell_1 h_1 + \cdots + \ell_r h_r
\]
where $\ell_i$ are linear forms and $h_i$ are homogeneous polynomials of degree $\deg(f) - 1$.

Analogously to other additive decompositions of polynomials and tensors, such as Waring rank or tensor rank decompositions, the notions of strength and slice rank are controlled geometrically by membership into corresponding algebraic varieties: the \textit{secant varieties of the varieties of reducible forms}, and \textit{of forms having a linear factor}, respectively. See \Cref{subsec: slice rank} for the precise definitions. Determining complete sets of defining equations for secant varieties, and for these varieties in particular, is considered a hard problem. In this work, we provide some defining equations for higher strength and slice rank and prove a reduction result for the search space of set-theoretic equations in the case of cubic forms of slice rank two. More precisely, we have three main contributions:
\begin{itemize}[leftmargin=\parindent]
 \item The set of polynomials of strength one, that is the reducible polynomials, is an algebraic variety. In the case of polynomials in three variables, set-theoretic equations for this variety were determined in  \cite{Rup:ReduziabilitatKurven} in terms of the existence of special sections of the cotangent bundle of $\bbP^2$. We review this result, and we reinterpret the resulting determinantal equations in the language of Young flattenings. We provide several representation-theoretic insights, and prove that Ruppert's equations yield set-theoretic equations for the variety of reducible forms in any degree and any number of variables. See \Cref{thm: ruppert original} and \Cref{thm: ruppert any vars}.
\item We introduce new equations for the (closure of the) set of polynomials of strength at most $r$. These are determinantal equations, constructed as a generalization of the ones of \cite{Rup:ReduziabilitatKurven}. They encode the existence of special syzygies among the partial derivatives of a polynomial $f$ of small strength. Prior to this work, the only known equations for small strength were built on the non-emptiness of the singular locus of the hypersurface $\{ f= 0\}$, see \Cref{prop: singular locus}. The new equations enrich and refine this point of view. We prove more refined results in the case of cubic forms of strength two in five variables; see \Cref{thm: 11 syzygies for slrk 2}.
 \item We prove an inheritance result for the variety of cubic forms and slice rank at most $2$. In geometric terms, we prove that a cubic hypersurface $X = \{ f= 0\}$ contains a linear space of codimension two if and only if a generic linear sections $X \cap \bbP^{13}$ does. This implies that set-theoretic equations for 
 the second secant variety of the variety of forms having a linear factor are obtained by pulling back the equations of cubics of slice rank two in (at most) fourteen variables. See \Cref{thm:restrictions_vars}.
\end{itemize}

\Cref{thm:restrictions_vars} should be placed into the framework of $\bfVec$-varieties into polynomial functors. We refer to \Cref{subsec: functors} for the definitions; we point out here that the variety of forms of degree $d$ having a factor of degree $k$, as well as its secant varieties, can be realized as $\bfVec$-varieties, in the sense that associating to a vector space $V$ such subvariety of $S^dV$ is a \emph{functor} from the category of vector spaces to the category of varieties. In this setting, \cite[Corollary~3]{Draisma:TopologicalNoetherianityPolyFunctors} guarantees that ``set-theoretic equations for these varieties are determined in finite dimension'': for instance, for every $r \in\bbN$, there exists a value $n_0$ depending on $r$ such that, for every $n$, a homogeneous polynomial $f$ of degree $d$ in $n$ variables has slice rank at most $r$ if and only if all its restrictions to $n_0$ variables have slice rank at most $r$. In \cite{BDV24}, this finiteness result was made algorithmic, providing a finite theoretical procedure that determines a vector space $U_0$ of dimension $n_0$ from which one pulls back set-theoretic equations for the image of a polynomial map. The variety of cubic forms of slice rank at most two was an important guiding example of a closed subset of a polynomial functor, where even an upper bound on $n_0$ was unknown, see \cite[Example~1.4.1]{BDV24}.

Determining the integer $n_0$ for a given $\bfVec$-variety, or even providing upper bounds, is challenging. It has been achieved only in very few cases, and often for trivial reasons. For instance, a conciseness argument allows one to say that a system of set-theoretic equations for the $r$-th secant variety of Segre varieties, Veronese varieties and Segre-Veronese varieties is determined in dimension $n_0 = r+1$ \cite[Corollary~7.4.2.3]{Lan12}. The same argument shows the bound $n_0 \leq d+1$ for the Chow variety of completely reducible forms of degree $d$ and the bound $n_0 \leq dr+1$ for its $r$-th secant variety \cite[Remark~8.6.2.5]{Lan12}. In fact, in these cases the statement is true also scheme-theoretically and ideal-theoretically by \cite[Proposition~7.1.2(b)]{Wey:VB}.

\Cref{thm: ruppert any vars} guarantees that set-theoretic equations for the variety of reducible forms (and in fact, for any of its components) are determined in dimension $n_0 = 3$. The same argument shows an analogous result for varieties of polynomials with factors of specified degree; for instance, the bound $n_0 \leq d+1$ for the Chow variety can be upgraded to $n_0 = 3$, which is indeed attained by a classical system of equations, known as Brill's equations \cite{Gua18}. A similar argument can be used to see that the variety of polynomials of degree $e\cdot d$ of the form $g^d$ with $g$ homogeneous of degree $e$ has set-theoretic equations determined by restrictions to binary forms, that is $n_0 = 2$; in fact, these equations define the coincident root loci \cite{HilbPowers,Ch04,AbCh07}. A slightly more involved example is given in \cite[Proposition~3.1]{ChrGesZui:GapSubrank}: the variety of tensors of partition rank one, a tensor analog of the strength studied in the context of additive combinatorics \cite{Nas20}, has set-theoretic equations determined by restrictions to $\bbC^2 \ootimes \bbC^2$. 

\Cref{thm:restrictions_vars} guarantees that the variety of cubics of slice rank two has set-theoretic equations determined by restrictions to spaces of dimension fourteen; in particular, in this case $n_0 \leq 14$. To the best of our knowledge, this is the first example where an explicit upper bound is given without relying on straightforward conciseness considerations or simple genericity conditions. We propose a conjectural optimal value for $n_0$ in general in \Cref{conj: general restrictions}.

Throughout the paper, some claims are verified via direct computations using computer algebra software; we indicate when this is the case. For these computations, we used a Lenovo ThinkBook 14 G2 ITL, Intel Core i5 processor at 2.4 GHz with 8GB RAM, running Debian 11. We used the computer algebra system Macaulay2 \cite{M2}, v.1.21. We ran numerical experiments using Julia, v.1.11, and the package HomotopyContinuation.jl, v.2.13 \cite{BreTim}. Since our computations employ standard algorithms and routines, we do not provide additional code.

\section{Preliminaries}\label{sec: preliminaries}
Throughout the paper $V$ denotes a $(n+1)$-dimensional vector space over $\bbC$. Let $\{x_0,\ldots,x_n\}$ be a basis of $V$, and let 
\[
\Sym V \simeq  \bbC[x_0,\ldots,x_n]
\]
be the symmetric algebra of $V$, identified with the ring of polynomials on $V^*$. The subspace of homogeneous polynomials of degree $d$ is denoted by $S^{d}V$.

A \emph{variety} is an affine or a projective algebraic variety, possibly reducible. For a subset $X \subseteq \bbP^N$, write $I(X)$ for the ideal of polynomial equations in $N+1$ variables vanishing on $X$. For a subset $G$ of homogeneous polynomials, let $Z(G)$ be the variety of points of $\bbP^N$ defined by the vanishing of the elements of $G$. In particular, given $f\in S^{d}V$, $Z(f) \subseteq \bbP V^*$ denotes the hypersurface defined by the vanishing of $f$. 

For a variety $X\subseteq \bbP^N$, let $\sigma_r(X)$ denote the $r$-th \emph{secant variety} of $X$, that is the closure of the set of points lying on $r$-secant planes to $X$:
\[
\sigma_r(X) = \bar{\bigcup_{p_1 \vvirg p_r \in X} \langle p_1 \vvirg p_r\rangle};
\]
the overline denotes the closure, equivalently in the Zariski or the Euclidean topology.

\subsection{Strength and slice rank}\label{subsec: slice rank}
Let $f\in S^{d}V$ be a homogeneous polynomial. The \emph{strength} of $f$ is
\[
\str(f) \coloneqq \min  \biggl\{ r\in\bbN  :  f = \textstyle \sum_{i=1}^r g_i h_i \text{ for some } g_i\in S^{k}V,\, h_i\in S^{d-k}V \text{ with } k<d \biggr\}.
\]
The \emph{slice rank} of $f$ is
\[
\slrk(f) \coloneqq \min  \biggl\{ r\in\bbN  :  f = \textstyle \sum_{i=1}^r  \ell_i h_i \text{ for some } \ell_i \in S^{1}V,\, h_i\in S^{d-1}V \biggr\}.
\]
For any $f\in S^{d}V$, it is clear that $\str(f) \leq \slrk(f)$. It is immediate that, when $d =3$, the notions of strength and slice rank coincide. Moreover, strength and slice rank are equal for generic forms in any number of variables and any degree, see \cite[Theorem~1.8]{BBOV23}, but there are explicit examples where their gap can be arbitrarily large, see \cite[Proposition 3.2]{BikOne:StrengthGeneralPoly}. 

For every $k = 1 \vvirg d-1$, let 
\[
\calR^k_{n,d} \coloneqq \Set{ [f] \in \bbP S^{d} V : f =gh \text{ for some } g\in S^k V,\,h\in S^{d-k}V};
\]
note that $\calR^k_{n,d} = \calR^{d-k}_{n,d}$. The union 
\[
\calR^\bullet_{n,d} \coloneqq \bigcup_k \calR^k_{n,d}
\]
is the {\it variety of reducible forms}. In particular, for any $f\in S^{d}V$, we have $\slrk(f) = 1$ if and only if $[f] \in \calR^1_{n,d}$ and $\str(f) = 1$ if and only if $[f] \in \calR^{\bullet}_{n,d}$.

The secant varieties $\sigma_r(\calR^1_{n,d})$ and $\sigma_r(\calR^\bullet_{n,d})$ are the main object of study of our work. Since $\smash{\calR^\bullet_{n,d}}$ is reducible, its secant varieties are also reducible. By \cite[Theorem~1.8]{BikOne:StrengthGeneralPoly}, we have that $\sigma_r(\calR^1_{n,d})$ is an irreducible component of $\sigma_r(\calR^\bullet_{n,d})$. 

A geometric characterization of the slice rank of a homogeneous polynomial $f$ can be given in terms of the existence of linear subspaces contained in the hypersurface $Z(f)$. One important consequence of this characterization is that slice rank is lower semicontinuous: in particular, in the definition of $\sigma_r(\calR^1_{n,d})$ as (closure of the) union of all $r$-secant planes to $\smash{\calR^1_{n,d}}$, the closure operation is redundant. This result is proved for cubic forms in \cite{DerkEggSnow}, but the same proof applies in general. 
\begin{proposition}[{\cite[Proposition~2.2]{DerkEggSnow}}]
\label{prop: slice equivalent to fano}
Let $f \in S^{d} V$. Then 
\[
\slrk(f) = \min \bigl\{ r\in\bbN : \text{there is a linear space $L\subset Z(f)$ with $\codim L=r$} \bigr\}. 
\]
In particular, $\slrk(f)$ is lower semicontinuous.
\end{proposition}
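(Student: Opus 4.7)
The plan is to prove the claimed equality by two matching inequalities and then deduce the semicontinuity statement from the characterization by pushing forward a closed subset of an incidence variety.

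For the inequality $\min \bigl\{ \codim L : L \subset Z(f) \bigr\} \leq \slrk(f)$, first I would take an optimal slice rank decomposition $f = \ell_1 h_1 + \cdots + \ell_r h_r$ with $r = \slrk(f)$. The linear forms $\ell_1 \vvirg \ell_r$ must be linearly independent: if they were not, one could substitute a dependency to rewrite $f$ as a sum of $r-1$ terms, contradicting minimality. Consequently $L \coloneqq Z(\ell_1 \vvirg \ell_r) \subset \bbP V^*$ is a linear subspace of codimension exactly $r$, and each summand $\ell_i h_i$ vanishes identically on $L$, so $L \subset Z(f)$.

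For the reverse inequality, suppose $L \subset Z(f)$ is a linear subspace of codimension $r$. Choose linearly independent linear forms $\ell_1 \vvirg \ell_r \in V$ such that $L = Z(\ell_1 \vvirg \ell_r)$. Since $L$ is a complete intersection of hyperplanes, its homogeneous ideal is $I(L) = (\ell_1 \vvirg \ell_r)$, so the hypothesis $f \in I(L)$ yields an expression $f = \ell_1 h_1 + \cdots + \ell_r h_r$. Because the $\ell_i$ are homogeneous of degree one and $f$ is homogeneous of degree $d$, we may take the $h_i$ homogeneous of degree $d-1$, witnessing $\slrk(f) \leq r$. The step where care is needed is justifying $I(L) = (\ell_1 \vvirg \ell_r)$; this is standard for linear subvarieties but is really the core algebraic content of the equivalence.

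For the lower semicontinuity, I would exploit the geometric characterization just proved. Let $\Gr(n+1-r,V)$ be the Grassmannian parametrizing $r$-codimensional projective linear subspaces of $\bbP V^*$, and consider the incidence variety
\[
\calI_r \coloneqq \Set{ ([f],L) \in \bbP S^d V \times \Gr(n+1-r,V) : L \subset Z(f)}.
\]
For a fixed $L$, the condition $L \subset Z(f)$ translates to the vanishing of finitely many linear forms in the coefficients of $f$ (obtained, for instance, by restricting $f$ to any basis of $L$), and these conditions vary algebraically with $L$, so $\calI_r$ is Zariski-closed. Since the Grassmannian is projective (hence the second projection is proper), the image of $\calI_r$ under the first projection is closed in $\bbP S^d V$. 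By the first part of the proposition, this image is precisely $\{ [f] : \slrk(f) \leq r\}$, giving lower semicontinuity.
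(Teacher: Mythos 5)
Your proof is correct and follows the same route as the cited reference \cite{DerkEggSnow}: linear independence of the $\ell_i$ in a minimal decomposition gives one inequality, the fact that $I(L)=(\ell_1,\dots,\ell_r)$ for a linear space gives the other, and closedness of $\{\slrk \leq r\}$ follows by projecting the (closed) incidence variety over the Grassmannian. The paper itself does not reproduce the argument, but your write-up is a complete and accurate version of it, including the only step requiring care (that the ideal of a linear subvariety is generated by the linear forms vanishing on it, so that the degree-$d$ homogeneous component of the membership $f\in I(L)$ yields the desired decomposition).
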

In contrast to the result for the slice rank of \Cref{prop: slice equivalent to fano}, the strength is not, in general, lower semicontinuous; see \cite[Theorem~1.3]{BBOV:StrengthNotClosed}. One defines the \emph{border strength} to be the semicontinuous closure of the strength; more precisely,
\[
\ustr(f) \coloneqq \min\Set{r\in\bbN : f \in \sigma_r( \calR^\bullet_{n,d}) }.
\]

The dimension of $\sigma_r(\calR_{n,d}^1)$ is not difficult to compute. One possible proof is built on a vector bundle construction using \Cref{prop: slice equivalent to fano} and is given in \cite{Man:HypersurfacesEspacesLineaires}. 
\begin{proposition}[{\cite[Theorem, p.~308]{Man:HypersurfacesEspacesLineaires}}]
\label{prop: dimension secants slice}
For every $n,d\in\bbN$, with $d \geq 3$, 
\[
\codim \sigma_r(\calR^1_{n,d}) = \max\left\{ 0 , \binom{d+n-r}{n-r} - r(n-r+1) \right\}.
\]
In particular, 
\[
\codim\calR_{n,d}^1=\binom{d+n-1}{n-1}-n.
\] 
\end{proposition}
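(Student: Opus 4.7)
The plan is to combine \Cref{prop: slice equivalent to fano} with a dimension count on an incidence variety over the Grassmannian of codimension-$r$ linear subspaces of $\bbP V^*$.

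Let $G = \Gr(r, V)$; then $\dim G = r(n-r+1)$, and to each $U \in G$ one associates the codimension-$r$ linear space $L_U = \bbP(\Ann U) \subset \bbP V^*$. Consider the incidence variety
\[
I = \{(U, [f]) \in G \times \bbP S^{d}V : f|_{L_U} = 0 \}.
\]
The first projection realizes $I$ as a projective subbundle of $G \times \bbP S^d V$: from the short exact sequence $0 \to U \cdot S^{d-1}V \to S^d V \to S^d(V/U) \to 0$, each fiber has dimension $\binom{n+d}{n} - \binom{n-r+d}{n-r} - 1$, giving
\[
\dim I = r(n-r+1) + \binom{n+d}{n} - \binom{n-r+d}{n-r} - 1.
\]
By \Cref{prop: slice equivalent to fano}, the second projection $\pi_2 \colon I \to \bbP S^{d}V$ has image $\sigma_r(\calR^1_{n,d})$, which immediately yields
\[
\codim \sigma_r(\calR^1_{n,d}) \geq \binom{n-r+d}{n-r} - r(n-r+1).
\]

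The remainder of the proof establishes equality by splitting into two regimes. When $\binom{n-r+d}{n-r} \leq r(n-r+1)$, the claim is $\sigma_r(\calR^1_{n,d}) = \bbP S^d V$: every degree-$d$ hypersurface in $\bbP^n$ contains a linear space of codimension $r$. Interpreting each $f$ as a global section of the vector bundle $\calE$ on $G$ of rank $\binom{n-r+d}{n-r}$ with fiber $S^d(V/U)$ over $U$, this reduces to non-vanishing of the top Chern class $c_{\rk \calE}(\calE)$, a classical computation via Schubert calculus on the Grassmannian. When $\binom{n-r+d}{n-r} > r(n-r+1)$, one must show that $\pi_2$ is generically finite. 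For this it suffices to exhibit a single $f_0 \in \sigma_r(\calR^1_{n,d})$ whose fiber contains an isolated point. A natural choice is $f_0 = x_1 h_1 + \cdots + x_r h_r$ for generic $h_i \in S^{d-1}V$, with $L_0 = \{x_1 = \cdots = x_r = 0\}$ and $U_0 = \langle x_1, \ldots, x_r\rangle$; a normal bundle computation, equivalent to checking injectivity of the differential of $s_{f_0}$ at $U_0$, shows that $L_0$ is isolated in $\pi_2^{-1}([f_0])$. Semicontinuity of fiber dimension then forces the generic fiber of $\pi_2$ to be zero-dimensional, and the codimension formula follows.

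The main technical obstacle, in both regimes, is the same underlying analysis of the bundle $\calE$: non-vanishing of $c_{\rk\calE}(\calE)$ on one side, injectivity of the differential of the evaluation map at a chosen $U_0$ on the other. Both are cohomological computations on Grassmannians and form the core of Manivel's original vector bundle argument.
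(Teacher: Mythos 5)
Your proposal is correct and is precisely the incidence-variety/vector-bundle argument of Manivel that the paper cites for this proposition (the paper does not reproduce a proof, remarking only that ``one possible proof is built on a vector bundle construction using \Cref{prop: slice equivalent to fano}''). The one point worth flagging is that the hypothesis $d \geq 3$ enters exactly in the two computations you defer to the classical literature — in particular the non-vanishing of $c_{\rk\calE}(\calE)$ fails for $d=2$ — but both deferred steps are indeed standard and your dimension count and semicontinuity argument around them are sound.
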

Determining the dimension of the other irreducible components of $\sigma_r(\calR_{n,d}^\bullet)$ is challenging, and it is related to deep open problems in commutative algebra, such as Fr\"oberg's conjecture \cite[p.~120]{Fro:Ineq}. We refer to \cite{catalisano2019secant,BikOne:StrengthGeneralPoly} for a related discussion.

To the best of our knowledge, the only effective method for determining lower bounds on strength and slice rank relies on the fact that whenever $f$ has low strength, then the hypersurface $Z(f)$ is singular. More precisely, we have the following result.
\begin{proposition}\label{prop: singular locus}
 Let $f \in S^{d} V$. If $\ustr(f) \leq r$, then $\codim \bigl(\Sing(f)\bigr) \leq 2r$. In particular, if $f$ is smooth, then
 \[
 \ustr(f) \geq \biggl\lceil \frac{n+1}{2} \biggr\rceil.
 \]
\end{proposition}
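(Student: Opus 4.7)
My plan is to first establish the codimension bound for polynomials whose strength is actually realized (i.e.\ $\str(f) \leq r$), and then propagate the conclusion to the border strength $\ustr(f)$ by a semicontinuity argument. The bound for smooth hypersurfaces will then follow by contraposition.

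Suppose $\str(f) \leq r$ and fix a decomposition $f = \sum_{i=1}^r g_i h_i$ with each $g_i, h_i$ homogeneous of positive degree. Let $Y \subseteq \bbP V^*$ be the common vanishing locus of $g_1, h_1, \dots, g_r, h_r$. By the Leibniz rule, each partial derivative $\partial_j f = \sum_i \bigl[(\partial_j g_i)\, h_i + g_i\, (\partial_j h_i)\bigr]$ vanishes on $Y$, and so does $f$; hence $Y \subseteq \Sing(f)$. Since $Y$ is the intersection of $2r$ hypersurfaces in $\bbP^n$, Krull's height theorem gives $\codim_{\bbP^n} Y \leq 2r$, and therefore $\codim \Sing(f) \leq 2r$.

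To pass to border strength, I will use the incidence variety
\[
\Gamma = \bigl\{(f, [p]) \in S^d V \times \bbP V^* : f(p) = 0 \text{ and } \partial_j f(p) = 0 \text{ for every } j\bigr\},
\]
which is Zariski closed. Since the second projection $S^d V \times \bbP V^* \to S^d V$ is proper, upper semicontinuity of fibre dimension implies that, for each integer $m$, the locus $\{f \in S^d V : \dim \Sing(f) \geq m\}$ is Zariski closed. Taking $m = n - 2r$, the previous paragraph shows this locus contains every form of strength at most $r$, and hence contains its closure $\sigma_r(\calR^\bullet_{n,d})$. Consequently every $f$ with $\ustr(f) \leq r$ satisfies $\codim \Sing(f) \leq 2r$.

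For the smoothness statement, if $f$ is smooth then $\Sing(f) = \emptyset$, so (with the convention $\codim \emptyset = n+1$) the inequality $\codim \Sing(f) \leq 2\ustr(f)$ forces $2\ustr(f) \geq n+1$, yielding $\ustr(f) \geq \lceil (n+1)/2 \rceil$. The only non-routine ingredient is the semicontinuity step in the second paragraph, and even that is a standard invocation of upper semicontinuity of fibre dimension for a proper projection; I do not anticipate a substantive obstacle.
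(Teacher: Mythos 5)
Your argument is correct and is essentially the standard proof that the paper delegates to \cite[Proposition~6]{GGIL22}: the common zero locus $Y = Z(g_1,h_1,\dots,g_r,h_r)$ sits inside $\Sing(f)$ by the Leibniz rule, has codimension at most $2r$ by Krull/the projective dimension theorem (note that when $2r\le n$ this locus is automatically nonempty, and when $2r>n$ the bound is vacuous anyway), and the passage to border strength via closedness of $\{f : \dim\Sing(f)\ge n-2r\}$, using properness of the projection from the incidence variety, is exactly the right semicontinuity step. No gaps.
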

The content of \Cref{prop: singular locus} is mentioned in \cite[Remark~1.1]{AnHoc:StillmanConj}, and is presented more explicitly in \cite[Remark~4.3]{BBOV:StrengthNotClosed}. A complete proof is given in \cite[Proposition~6]{GGIL22}.
By construction, the best lower bound that \Cref{prop: singular locus} can provide for the strength of a polynomial in $n+1$ variables is $\lceil \frac{n+1}{2} \rceil$, whereas if $f$ is generic, by \Cref{prop: dimension secants slice} and the results of \cite{BBOV23}, we have 
\[
\str(f) = n - o(n).
\] 
A small improvement for slice rank lower bounds was given in \cite[Theorem~18]{GGIL22}, providing explicit examples of polynomials of slice rank $\frac{n+1}{2} +1$ for odd $n$, see \cite[Lemma~19]{GGIL22}.
\begin{proposition}[{\cite[Theorem~18]{GGIL22}}]\label{prop: singular locus compt}
Let $f \in S^{d} V$ be a form such that \[\codim \bigl(\Sing(f)\bigr) = 2r.\] Then $\slrk(f) = r$ if and only if there is a linear space $L \subseteq Z(f)$ with $\codim L = r$ that contains one of the irreducible components of $\Sing(f)$.
\end{proposition}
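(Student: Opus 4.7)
The plan is to split the statement into its two directions and use the singular-locus bound of \Cref{prop: singular locus} as the starting point. Since $\codim \Sing(f) = 2r$, the contrapositive of that proposition forces $\ustr(f) \geq r$, and because $\ustr(f) \leq \str(f) \leq \slrk(f)$, the lower bound $\slrk(f) \geq r$ is automatic under the hypothesis. With this in hand, the ``if'' and ``only if'' directions reduce to producing, respectively, any $L$ of codimension $r$ inside $Z(f)$, and a particular such $L$ containing a component of $\Sing(f)$.

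For the ``if'' direction, the existence of some linear subspace $L \subseteq Z(f)$ with $\codim L = r$ already implies $\slrk(f) \leq r$ by \Cref{prop: slice equivalent to fano}. Combined with the preceding lower bound, this gives $\slrk(f) = r$; no use of the extra assumption that $L$ contains an irreducible component of $\Sing(f)$ is required here.

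The ``only if'' direction is the substantive part. By \Cref{prop: slice equivalent to fano} one may pick a linear $L \subseteq Z(f)$ with $\codim L = r$; after a linear change of coordinates, I would assume $L = Z(x_1, \dots, x_r)$ and write $f = x_1 h_1 + \cdots + x_r h_r$ for suitable $h_i \in S^{d-1}V$. A short calculation of the partial derivatives of $f$ restricted to $L$ yields $\partial_{x_i} f|_L = h_i|_L$ for $i \leq r$ and $\partial_{x_j} f|_L = 0$ for $j > r$, giving the set-theoretic equality $\Sing(f) \cap L = L \cap Z(h_1,\dots,h_r)$ in $\bbP^n$. Viewing this intersection inside $L \simeq \bbP^{n-r}$, it is cut out by $r$ hypersurfaces, so each of its irreducible components has dimension at least $n - 2r$. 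On the other hand, $\Sing(f) \cap L \subseteq \Sing(f)$ and the latter has dimension exactly $n - 2r$ by hypothesis. The dimension count then forces each component of $\Sing(f) \cap L$ to have dimension exactly $n-2r$, and to coincide (by dimension equality) with a full irreducible component $C$ of $\Sing(f)$; in particular $L \supseteq C$.

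The main obstacle I expect is a mild non-emptiness concern: the dimension count above only bites when $\Sing(f) \cap L \neq \emptyset$. This reduces to the elementary observation that $r$ hypersurfaces in $\bbP^{n-r}$ always meet provided $n-r \geq r$, and the bound $n \geq 2r$ is guaranteed by the hypothesis $\dim \Sing(f) = n - 2r \geq 0$. A secondary check is that the identification $\Sing(f) \cap L = L \cap Z(h_1,\dots,h_r)$ is only needed set-theoretically, so the partial-derivative calculation does not require any scheme-theoretic refinement.
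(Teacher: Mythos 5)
Your proof is correct; note that the paper itself gives no proof of this proposition but simply cites \cite[Theorem~18]{GGIL22}, and your argument (decompose $f=\sum_{i=1}^r x_ih_i$ along a minimal $L\subseteq Z(f)$, identify $\Sing(f)\cap L$ with $Z(h_1|_L,\dots,h_r|_L)$ in $L\simeq\bbP^{n-r}$, and force equality of dimensions against $\dim\Sing(f)=n-2r$) is essentially the standard one underlying that reference. The only cosmetic points are that the index range ``$j>r$'' should also include $j=0$, and that the lower bound $\slrk(f)\geq r$ via \Cref{prop: singular locus} is only needed in the ``if'' direction; neither affects the validity of the argument.
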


The condition described in \Cref{prop: singular locus} can be translated into equations for the variety $\sigma_r(\calR^{\bullet}_{n,d})$, namely explicit elements of $I( \sigma_r(\calR^{\bullet}_{n,d}) ) \subseteq \bbC[S^d V]$. There is usually little value in doing this explicitly because often the geometric condition that $Z(f)$ is singular in codimension $2r$ is easier to check rather than evaluating the equations that arise from such conditions. In this paper, we are not interested in realizing the equations for $\sigma_r(\calR^{\bullet}_{n,d})$ as explicit polynomials on $\bbC[S^d V]$; however, we will provide some information on their degree, and the complexity of the explicit evaluation, whenever this is possible. We refer to \cite[Section~2]{vdBDGIL:Metapolynomials} for a discussion on \emph{metacomplexity}, the area of algebraic complexity theory concerning complexity properties of equations of varieties which themselves control a complexity measure; in fact,  $\sigma_r(\calR^{\bullet}_{n,d})$ is an example of such.

\subsection{Polynomial functors and \textbf{Vec}-varieties}\label{subsec: functors}
We briefly introduce the framework of polynomial functors and polynomial maps between them. For more details, we refer to \cite{Draisma:TopologicalNoetherianityPolyFunctors}. Let $\bfVec$ be the category of finite-dimensional complex vector spaces and let $\bfVar$ be the category of complex affine algebraic varieties. A polynomial functor $P \colon \bfVec \to \bfVec$ is a direct sum of Schur functors, in the sense of \cite[Lecture 6]{FultonHarris}. For instance, the $d$-th symmetric power $S^d(-)$ is a polynomial functor which associates to a vector space $V$ the homogeneous component of degree $d$ in its symmetric algebra. A $\bfVec$-variety $X(-) \subseteq P(-)$ is a functor $X\colon \bfVec \to \bfVar$ with the property that, for every vector space $V$, the variety $X(V)$ is an affine variety of $P(V)$. For instance, $\smash{\calR^1_{-,d}} \subset S^d(-)$ assigning to a vector space $V$ the variety $\smash{\calR^1_{\dim V-1,d}} \subseteq S^d V$ of forms of slice rank one is a $\bfVec$-variety. Similarly, $\calR^\bullet_{-,d}$, as well as the secant varieties $\sigma_r(\calR^1_{-,d})$ and $\sigma_r(\calR^1_{-,d})$, are $\bfVec$-varieties. 

The functorial property guarantees that if $X(-)$ is a $\bfVec$-variety, then, every linear map $f : V \to W$ between vector spaces induces a map $P(f) \colon X(V) \to X(W)$, given by the restriction of the linear map $P(f)\colon P(V) \to P(W)$. In the setting of this work, this simply reflects the fact that the varieties of interest are invariant under change of coordinates. As a consequence, the ideals $I\bigl(X(V)\bigr) \subseteq \bbC[P(V)]$ are $\GL(V)$-representations and linear maps $f: V \to W$ induce pullback maps $\bbC[P(W)] \to \bbC[P(V)]$ mapping $I\bigl(X(W)\bigr)$ into $I\bigl(X(V)\bigr)$.

In this setting, one may fix inclusions $\bbC^{n} \subseteq \bbC^{n+1}$, endowing $\bbC^{\infty} = \bigcup_{n \in \bbN} \bbC^n$ with the structure of a direct system. A $\bfVec$-variety is then a subvariety $X_\infty \subseteq P(\bbC^{\infty})$ acted on by $\GL_\infty =  \bigcup_{n\in \bbN} \GL_n$. In this way, the results of \cite{Draisma:TopologicalNoetherianityPolyFunctors} apply and one has the following fundamental result.
\begin{theorem}[{\cite[Theorem~1 and Corollary~3]{Draisma:TopologicalNoetherianityPolyFunctors}}]\label{thm: topological noeth}
  Let $X(-) \subseteq P(-)$ be a $\bfVec$-variety. Then $X(-)$ is set-theoretically cut out by finitely many $\GL_\infty$-orbits of polynomial equations. In particular, there exists an integer $n_0$ such that the $\GL(V)$-orbits of the equations in $I(X(\bbC^{n_0})) \subseteq \bbC[ P(\bbC^{n_0})]$ cut out $X(V) \subseteq P(V)$ for every $V$.
\end{theorem}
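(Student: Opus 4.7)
The plan is to reduce the theorem to the \emph{topological noetherianity} of $P(\bbC^\infty)$ under the $\GL_\infty$-action: every descending chain of $\GL_\infty$-stable Zariski closed subvarieties of $P(\bbC^\infty)$ stabilizes. Once this is established, the first assertion follows: starting from the empty set of equations and adding one $\GL_\infty$-orbit of equations at a time, the descending chain of subvarieties cut out in $P(\bbC^\infty)$ must stabilize at $X(\bbC^\infty)$ after finitely many steps. The second assertion then follows because each of the finitely many $\GL_\infty$-orbit generators is a polynomial that factors through $P(\bbC^{n})$ for some finite $n$, by functoriality of $P$; taking $n_0$ to be the maximum of these integers yields a vector space $\bbC^{n_0}$ from which all defining equations of $X(V)$ can be pulled back for every $V$.

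To establish topological noetherianity, I would follow the inductive strategy of Draisma. Order polynomial functors by a well-founded invariant, recording first the highest degree $d$ of Schur functors occurring in $P$, then the total multiplicity of Schur functors of degree $d$, and so on down. The base case $d \leq 1$ is elementary: $P(\bbC^\infty)$ is a finite direct sum of copies of $\bbC$ and $\bbC^\infty$, and a direct argument handles the descending chain condition on $\GL_\infty$-stable closed subvarieties.

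For the inductive step, assume the result for every polynomial functor strictly smaller than $P$. Given a proper $\GL_\infty$-stable closed subvariety $X \subseteq P(\bbC^\infty)$, the key tool is the shift functor $\mathrm{Sh}_U P(V) := P(U \oplus V)$ for finite-dimensional $U$, whose direct-sum decomposition with respect to $U$ contains $P$ itself plus summands built from polynomial functors of strictly smaller top degree. Choose a generic point $p \in P(U) \setminus X(U)$ with $U$ sufficiently large. Using the $\GL_\infty$-equivariance of $X$ and the $U$-decomposition of $\mathrm{Sh}_U P$, the behavior of $X$ near $p$ is governed by a $\bfVec$-variety in a strictly smaller polynomial functor, to which the inductive hypothesis applies. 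Combined with a Noetherian induction on the complement of the orbit $\GL_\infty \cdot p$ in $X$, this yields the required finiteness.

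The main obstacle is the generic point reduction: one must identify, for a sufficiently generic $p$, the polynomial functor that genuinely controls $X$ in a transverse neighborhood of the orbit of $p$, and verify that this functor is strictly smaller than $P$ in the chosen well-ordering. This requires delicate representation-theoretic bookkeeping on Schur functors, together with properties of generic orbits under $\GL$-actions on Schur powers; these ingredients form the technical core of Draisma's argument in \cite{Draisma:TopologicalNoetherianityPolyFunctors}. Once they are granted, the inductive scheme produces topological noetherianity and hence the claimed finiteness of defining equations, together with the explicit dimension $n_0$.
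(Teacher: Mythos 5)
This statement is imported verbatim from Draisma's work (Theorem~1 and Corollary~3 of \cite{Draisma:TopologicalNoetherianityPolyFunctors}); the paper offers no proof of it, so there is nothing internal to compare against. Your outline is a faithful summary of Draisma's actual argument: topological noetherianity of $P(\bbC^\infty)$ under $\GL_\infty$ via a well-founded order on polynomial functors, shift functors, and localization at a generic point, followed by the standard descending-chain argument to extract finitely many orbits of equations and the observation that each equation already lives in $\bbC[P(\bbC^{n})]$ for some finite $n$. The two bookkeeping reductions you carry out explicitly (DCC $\Rightarrow$ finitely many orbits, and finitely many orbits $\Rightarrow$ existence of $n_0$) are correct. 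However, you explicitly grant the technical core of the inductive step --- identifying the strictly smaller polynomial functor controlling $X$ transversally to the orbit of a generic point --- and that is where essentially all of the difficulty of the theorem resides; as written, your text is an accurate roadmap of the cited proof rather than a self-contained proof.
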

Following \Cref{thm: topological noeth}, for a given $\bfVec$-variety $X(-) \subseteq P(-)$, it is a natural problem to determine the value $n_0$ with the property that set-theoretic equations for $X(\bbC^{n_0})$ give set-theoretic equations for every $X(V)$.

We illustrate this phenomenon in a simple example. Let $S^2(-)$ be the polynomial functor given by the symmetric square and let $X_r(-) \subseteq S^2(-)$ be the $\bfVec$-variety defined by
\[
X_r(V) = \{ q \in S^2V : \rank(q) \leq r \},
\]
that is the variety of symmetric matrices of rank at most $r$. For every $V$, the variety $X_r(V)$ is cut out by the minors of size $r+1$ of the symmetric matrix, which are elements of $S^{r+1} S^2 V^*$. If $\dim V \leq r$, this system of minors is empty and $X_r(V) = S^2V$. If $\dim V \geq r+1$, it is generated by the $\GL(V)$-orbits of a single principal minor, for instance the determinant of the top-left $(r+1) \times (r+1)$ submatrix. In particular, a set of generators for the system of minors arises via pullback of the single equations of $X(\bbC^{r+1})$. Therefore, in this case, $n_0 = r+1$.

In the setting of slice rank and strength, \Cref{thm: topological noeth} has the consequence that, for every $d,r$, there exists $n_0 = n_0(d,r)$ with the property that, for every $n$, a homogeneous polynomial $f \in S^d V$ belongs to $\sigma_r(\calR^\bullet_{n,d})$ if and only if all its restrictions to $n_0$ variables belong to $\sigma_r(\calR^\bullet_{n,d})$. A straightforward argument based on Bertini's Theorem allows us to see that, if $r = 1$, one can reduce to forms in $n_0 + 1 = 3$ variables; see \Cref{thm: ruppert any vars}. In \Cref{thm:restrictions_vars}, we will prove the upper bound $n_0 +1 \leq 14$ in the case $(d,r) = (3,2)$. 

\section{Equations for forms of strength one}
\label{sec:ruppert}

A complete set of set-theoretic equations for the variety $\calR^\bullet_{2,d}$ of reducible forms in three variables was determined by W.~Ruppert in \cite{Rup:ReduziabilitatKurven}. We refer to them as \emph{Ruppert's equations}. They arise as a system of determinantal equations of degree $d^2-1$. In this section, we review this result providing some geometric and representation-theoretic insights. We then prove an extension of this result to any number of variables, and we conclude with a discussion on equations for cubics of slice rank one arising from the structure of their isotropy groups.

\subsection{Set-theoretic equations for reducible ternary forms}
Let $V$ be a vector space of dimension $n+1$ with basis $x_0 \vvirg x_n$. Let $\partial_0 \vvirg \partial_n$ be the basis of $V^*$ dual to $x_0 \vvirg x_n$. The space $V^*$ acts naturally on the symmetric algebra $\Sym V$ by tensor contraction. In coordinates, the action is given by differentiation: for $g \in \Sym V$,
\[
\partial_i \cdot g= \frac{\partial}{\partial x_i} g.
\]
For every $e \geq 1$, consider the subspace of $V^* \otimes S^e V$ defined by 
\[
\fraksl^{(e-1)}(V) = \Bigl\{ \textstyle \sum_{i=0}^n \partial_i \otimes g_i  : \textstyle \sum _{i=0}^n \partial_i g_i = 0 \Bigr\} \subseteq V^* \otimes S^e V,
\]
that is called the $(e-1)$-th \emph{prolongation} of the Lie algebra $\fraksl(V)$ \cite{SiuHo73}. It naturally fits into the exact sequence 
\begin{equation}\label{eq: euler sequence via pieri}
0 \to \fraksl^{(e-1)}(V) \to V^* \otimes S^{e}V \to S^{e-1}V \to 0,
\end{equation}
where the second map is given by contraction. Note that when $e = 1$, the contraction map is the the trace map $V^* \otimes V = \End(V) \to \bbC$ and $\fraksl^{(0)}(V)$ coincides with $\fraksl(V)$, the Lie algebra of traceless endomorphisms. The exactness of \eqref{eq: euler sequence via pieri} implies 
\begin{equation}\label{eqn: dim slpV}
\dim \fraksl^{(e-1)}(V) = \dim (V^* \otimes S^{e} V) - \dim S^{e-1}V =  \frac{n(n+1+e)}{e} \binom{n+e-1}{n} . 
\end{equation}
Prolongation can be defined for every submodule of a polynomial ring and, even more generally, for every linear series of sections of a vector bundle. It appears in the study of exterior differential systems \cite[Chapter~8]{IvLan:Cartan}, of ideals of secant varieties \cite{SidSul:ProlontationCompAlgebra} and of Lie algebras of isotropy groups  \cite{SiuHo73}.

From a representation theoretic point of view, using Pieri's rule, one can see that the space $V^* \otimes S^e V$ decomposes, as a $\GL(V)$-representation, into two summands:
\[
V^* \otimes S^e V = S^{e-1}V \oplus \fraksl^{(e-1)}(V).
\]
The summand $\fraksl^{(e-1)}(V)$ is isomorphic, as an $\SL(V)$-representation, to the Schur module $\bbS_{(e+1,1^{n-1})}V$. In particular \eqref{eq: euler sequence via pieri} is one of the two exact sequences induced by this direct sum decomposition, and there is an analogous sequence
\begin{equation}\label{eq: euler sequence via pieri 2}
0 \to S^{e-1}V \to S^{e}V \otimes V^* \to \fraksl^{(e-1)}(V) \to  0,
\end{equation}
where the embedding $S^{e-1} V \to V^* \otimes S^eV$ is given by identifying $g \in S^{e-1} V$ with the symmetrization of $g \otimes \id_V \in S^{e-1} V \otimes V \otimes V^*$.

For every $e_1,e_2 \geq 0$, let $\mu \colon S^{e_1} V \otimes S^{e_2} V \to S^{e_1 + e_2} V$ be the multiplication map, defined on decomposable tensors by $\mu(f_1 \otimes f_2) = f_1f_2$ for every $f_1\in S^{e_1}V$ and $f_2\in S^{e_2}V$, and extended linearly.

\begin{definition}[Ruppert map]
For $f \in S^{d} V$, $e \geq 1$, the \textit{$e$-th Ruppert map} is
\begin{equation}\label{formula:Ruppert_map}
\begin{tikzcd}[row sep=0pt,column sep=1pc]
 \rho_f^{(e)}\colon \fraksl^{(e-1)} V\arrow{r} & S^{e+d-1} V\hphantom{.} \\
  {\hphantom{\rho_f^{(e)}\colon{}}}  {\displaystyle \sum_{i=0}^n g_i \otimes \partial_i} \arrow[mapsto]{r} & {\displaystyle \sum_{i=0}^n g_i \partial_i f},
\end{tikzcd}
\end{equation}
defined as the composition of the contraction map $\fraksl^{(e-1)} V \to S^{e}V \otimes S^{d-1} V$ defined by contracting $f$ against the factor $V^*$ of $\fraksl^{(e-1)} V \subseteq S^{e} V \otimes V^*$, and the multiplication map $S^{e}V \otimes S^{d-1} V \to S^{e+d-1}V$.
\end{definition}
The $e$-th Ruppert map is therefore an example of Young flattening, which were introduced in \cite[Section~4]{LanOtt}, see also \cite[Section~7.8]{Lan12}; it is, in fact, the restriction of the first shifted partial derivatives map from \cite{GKKS} to the prolongation $\fraksl^{(e-1)} V$.

The main result of \cite{Rup:ReduziabilitatKurven} shows that rank conditions on the map $\rho_f^{(e)}$ provide set-theoretic equations for the variety $\calR^\bullet_{2,d}$ of ternary forms of strength one. We refer to \cite[Section~3.2]{Schin:PolySpecialRegardRed} for a version of the same proof in English.

\begin{theorem}[{\cite{Rup:ReduziabilitatKurven}}]
\label{thm: ruppert original}
 Let $\dim V = 3$ and $f \in S^{d} V$. The following are equivalent: 
 \begin{enumerate}[label=(\roman*)]
     \item $f$ is reducible, that is, $f \in \calR^\bullet_{2,d}$;
     \item $\rank (\rho_f^{(d-2)}) < \dim \fraksl^{(d-3)}(V) = d^2 - 1$.
 \end{enumerate}
In particular, the ideal $I_{d^2-1}(\rho_f^{(d-2)})$ generated by the minors of size $d^2 - 1$ of $\rho_f^{(d-2)}$ cuts out $\calR^\bullet_{2,d}$ set-theoretically.
\end{theorem}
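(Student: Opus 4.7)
My plan is to establish both implications; the forward direction is explicit, while the reverse is the geometric crux, essentially as in Ruppert's original argument.

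For $(i) \Rightarrow (ii)$, note that $\calR^\bullet_{2,d}$ is the finite union of the irreducible components $\calR^k_{2,d}$, each being the image of the multiplication map $S^k V \times S^{d-k}V \to S^dV$, and in each the coprime locus is dense. The set $\{f : \rank \rho_f^{(d-2)} < d^2 - 1\}$ is closed in $S^d V$, so it suffices to produce a nonzero element of $\ker \rho_f^{(d-2)}$ for $f = gh$ with $\gcd(g,h) = 1$ and $1 \leq \deg g \leq d-1$. Exploiting $\dim V = 3$, I would take
\[
\omega = \nabla g \times \nabla h, \qquad \omega_i = \sum_{j,k} \epsilon_{ijk}(\partial_j g)(\partial_k h) \in S^{d-2}V,
\]
with $\epsilon_{ijk}$ the Levi-Civita symbol. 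Antisymmetry of $\epsilon$ together with commutativity of partials gives $\sum_i \partial_i \omega_i = 0$, so $\omega$ is divergence-free and lies in $\fraksl^{(d-3)}V$; the identities $\nabla g \cdot \omega = \nabla h \cdot \omega = 0$ yield $\omega(g) = \omega(h) = 0$, whence $\omega(f) = g\omega(h) + h\omega(g) = 0$; and coprimality of $g, h$ forces $\omega \neq 0$.

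For $(ii) \Rightarrow (i)$, suppose $\omega = \sum g_i \partial_i \in \ker \rho_f^{(d-2)}$ is nonzero. The key idea is to encode $\omega$ as a closed polynomial $2$-form via contraction with the volume form:
\[
\theta = \iota_\omega(dx_0 \wedge dx_1 \wedge dx_2) = g_0\,dx_1 \wedge dx_2 - g_1\,dx_0 \wedge dx_2 + g_2\,dx_0 \wedge dx_1.
\]
The prolongation condition $\sum \partial_i g_i = 0$ becomes $d\theta = 0$, while $\rho_f^{(d-2)}(\omega) = 0$ becomes $df \wedge \theta = 0$. Via the Euler-sequence identification $\fraksl^{(d-3)}V \cong H^0\bigl(\bbP V^*, \Omega^1(d-1)\bigr)$, the form $\theta$ is a global section of $\Omega^1_{\bbP^2}(d-1)$ that is ``logarithmic along $C = Z(f)$'': via the conormal sequence
\[
0 \to \calO_C(-1) \to \Omega^1_{\bbP^2}(d-1)|_C \to \Omega^1_C(d-1) \to 0,
\]
the restriction $\theta|_C$ lies in the subsheaf $\calO_C(-1)$. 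A residue/adjunction count on $C$, using $\omega_C \cong \calO_C(d-3)$ and the degree bound $d-2$ on the components $g_i$, then shows that such a nonzero $\theta$ cannot exist when $C$ is irreducible of degree $d$. Hence $C$ is reducible, and so is $f$.

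The main obstacle is this last geometric step: ruling out nonzero $\theta \in H^0\bigl(\Omega^1_{\bbP^2}(d-1)\bigr)$ with $df \wedge \theta = 0$ when $C$ is irreducible. Making it precise requires a careful residue/adjunction analysis tracking how $\theta$ interacts with the irreducible decomposition of $C$, and this is the technical heart of Ruppert's original argument, reviewed in detail in the reference cited in the introduction.
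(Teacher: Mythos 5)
Your proposal matches the paper's treatment: your element $\omega = \nabla g \times \nabla h$ is exactly the Laplace expansion of the $3\times 3$ determinant $\Delta$ used in \Cref{prop: ruppert_first_implication_new}, and the verification that it is divergence-free, annihilates $f$, and is nonzero on a dense subset of each component is the same semicontinuity argument. For the converse you, like the paper, only sketch the differential-forms reformulation ($d\theta = 0$, $df\wedge\theta = 0$) and defer the actual irreducibility argument to Ruppert/Schinzel, which is precisely what the paper does, so no new gap is introduced beyond what the paper itself leaves to the cited references.
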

We give a proof of one of the two implications, which serves as a preparatory result for the rank conditions described in \Cref{sec: more equations} for higher strength. 
\begin{proposition}\label{prop: ruppert_first_implication_new}
Let $\dim V = 3$ and $f \in S^{d} V$. If $f$ is reducible, then
\[
\rank (\rho_f^{(d-2)}) < \dim \fraksl^{(d-3)}(V) = d^2 - 1.
\]
\end{proposition}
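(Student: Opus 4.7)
The plan is to produce an explicit nonzero element of $\ker \rho_f^{(d-2)}$, exploiting the three-dimensional cross-product. For any homogeneous $\phi,\psi \in \bbC[x_0,x_1,x_2]$, set $\bfw \coloneqq \nabla \phi \times \nabla \psi$, viewed as $\sum_i w_i \otimes \partial_i \in S^{\deg \phi + \deg \psi - 2}V \otimes V^*$. The vector-calculus identity $\div(\mathbf a \times \mathbf b) = \mathbf b \cdot \mathrm{curl}(\mathbf a) - \mathbf a \cdot \mathrm{curl}(\mathbf b)$ combined with $\mathrm{curl}(\nabla) = 0$ yields $\sum_i \partial_i w_i = 0$, so $\bfw$ naturally lands in the prolongation $\fraksl^{(\deg \bfw - 1)}(V)$, while the self-orthogonality of the cross product gives $\bfw \cdot \nabla \phi = \bfw \cdot \nabla \psi = 0$.

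I would first handle the case in which $f$ has at least two distinct irreducible factors: choose a factorization $f = gh$ with $\gcd(g,h) = 1$ and $\deg g, \deg h \geq 1$, and take $\bfw = \nabla g \times \nabla h$. Then $\bfw \in \fraksl^{(d-3)}(V)$, and the formula $\nabla f = h \nabla g + g \nabla h$ together with the self-orthogonality above yields $\rho_f^{(d-2)}(\bfw) = \bfw \cdot \nabla f = 0$. The main obstacle is showing $\bfw \neq 0$: a vanishing cross product forces $g$ and $h$ to be algebraically dependent, and I plan to invoke the classical fact that two algebraically dependent homogeneous polynomials of positive degrees $a,b$ must satisfy $g^b = c\,h^a$ for some $c \in \bbC^*$ (proved by decomposing any relation $P(g,h)=0$ into bihomogeneous components and observing that $g^b/h^a$ then becomes algebraic over $\bbC$). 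Combined with coprimality of $g$ and $h$ in the UFD $\bbC[x_0,x_1,x_2]$, this forces both $g$ and $h$ to be constants, contradicting $\deg g, \deg h \geq 1$.

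The remaining case is that of a pure power $f = \phi^e$ with $\phi$ irreducible of degree $k$ and $e \geq 2$; here every factorization of $f$ yields parallel gradients, so the previous argument does not apply directly. Instead, I would choose a homogeneous $\eta \in S^{d-k}V$ algebraically independent from $\phi$ and set $\bfw = \nabla \phi \times \nabla \eta$. The degree matches ($\deg \bfw = d-2$), so $\bfw \in \fraksl^{(d-3)}(V)$, and the identity $\nabla f = e \phi^{e-1} \nabla \phi$ together with self-orthogonality yields $\rho_f^{(d-2)}(\bfw) = 0$; algebraic independence of $\phi$ and $\eta$ ensures $\bfw \neq 0$. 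Such an $\eta$ always exists: if $k \geq 2$, the irreducibility of $\phi$ forces it to depend on at least two variables, so $\eta = x_i^{d-k}$ is algebraically independent from $\phi$ for some $i$; if $k = 1$, a linear change of coordinates puts $\phi = x_0$, and one takes $\eta = x_1^{d-1}$. In both cases we obtain a nonzero element of $\ker \rho_f^{(d-2)} \cap \fraksl^{(d-3)}(V)$, whence $\rank \rho_f^{(d-2)} < \dim \fraksl^{(d-3)}(V) = d^2 - 1$.
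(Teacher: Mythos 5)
Your argument is correct, and the kernel element at its core is the same one the paper uses: the tensor $\nabla g \times \nabla h$, which the paper writes as a $3\times 3$ determinant $\Delta$ whose last row consists of the symbols $\partial_0,\partial_1,\partial_2$; your divergence and self-orthogonality identities are exactly the paper's verifications that $\Delta \in \fraksl^{(d-3)}(V)$ and $\Delta(f)=0$. Where you genuinely diverge is in how non-vanishing is handled. The paper notes that the rank condition is closed and therefore, by semicontinuity, it suffices to exhibit a nonzero kernel element for a \emph{generic} $f=gh$ in each component $\calR^k_{2,d}$, for which $\Delta\neq 0$ is immediate; it never needs to discuss degenerate factorizations. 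You instead treat every reducible $f$ directly: for $f$ with two coprime factors you deduce $\nabla g\times\nabla h\neq 0$ from the Jacobian criterion for algebraic dependence (valid in characteristic zero) together with the classical proportionality statement $g^b=c\,h^a$ for algebraically dependent homogeneous forms, and for pure powers $f=\phi^e$ --- where every factorization of $f$ yields parallel gradients and the paper's element would vanish --- you introduce an auxiliary form $\eta$ algebraically independent of $\phi$ and use $\nabla\phi\times\nabla\eta$, exploiting that $\nabla f$ is proportional to $\nabla\phi$. Both routes are sound; the paper's is shorter, while yours produces an explicit nonzero element of $\ker(\rho_f^{(d-2)})$ for \emph{every} reducible $f$ rather than only for generic ones, avoids any appeal to semicontinuity, and shows in passing that for non-reduced $f$ the kernel element need not come from a factorization of $f$ itself. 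The price is the case analysis and the two classical facts you invoke, both standard.
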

\begin{proof}
The condition $\rank (\rho_f^{(d-2)}) < \dim \fraksl^{(d-3)}(V)$ is equivalent to the non-injectivity of the Ruppert map $\rho_f^{(d-2)}$. Therefore, by semicontinuity of matrix rank, it suffices to determine an element in $\smash{\ker( \rho_f^{(d-2)} )}$ for a generic $f \in \smash{\calR_{n,d}^k}$ and for every $k$. Let $g \in S^k V$ and $h \in S^{d-k} V$ be generic polynomials and let $f = gh$. Define
\[
\Delta \coloneqq \det \begin{pNiceMatrix} 
\partial_0g & \partial_1g & \partial_2g \\ 
\partial_0h & \partial_1h & \partial_2h \\ 
\partial_0  & \partial_1  & \partial_2  
\end{pNiceMatrix} \in S^{d-2} V \otimes V^*,
\]
which is to be read as the result of the Laplace expansion of the determinant, with the multiplication between elements of the last row and elements of the other rows identified with the tensor product. Explicitly, setting 
\[
\Delta_0\coloneqq \det \begin{pNiceMatrix} 
\partial_1g & \partial_2g \\ \partial_1h & \partial_2h 
\end{pNiceMatrix}, \qquad 
\Delta_1 \coloneqq -\det \begin{pNiceMatrix} 
\partial_0g & \partial_2g \\ \partial_0h & \partial_2h 
\end{pNiceMatrix}, \qquad 
\Delta_2 \coloneqq \det \begin{pNiceMatrix} 
\partial_0g & \partial_1g \\ \partial_0h & \partial_1h 
\end{pNiceMatrix},
\]
one has $\Delta = \Delta_0 \otimes \partial_0 + \Delta_1 \otimes \partial_1 + \Delta_2 \otimes \partial_2$.

We show that $\Delta$ is a non-trivial element of $\smash{\ker( \rho_f^{d-2} )}$. First, note that $\Delta \neq 0 $, because $g,h$ are chosen generically. Then, we have
\begin{align*}
\sum_{i=0}^2\partial_i \Delta_i =  
                         \partial_0 (\partial_1 g \partial_2 h - \partial_2g \partial_1h) + 
                         \partial_1 (\partial_2 g \partial_0 h - \partial_0g \partial_2h) + \partial_2 (\partial_0 g \partial_1 h - \partial_1g \partial_0h)=0,
\end{align*}
so $\Delta \in \fraksl^{(d-3)}(V)$. Finally, by Leibniz's rule and the linearity of the determinant in the last row, we have
\begin{align*}
 \rho_f^{(d-2)} (\Delta ) = \Delta(f) &= \det \begin{pNiceMatrix} 
\partial_0g & \partial_1g & \partial_2g \\ 
\partial_0h & \partial_1h & \partial_2h \\ 
\partial_0f  & \partial_1 f & \partial_2f
\end{pNiceMatrix}\\[1ex]
&= \det \begin{pNiceMatrix} 
\partial_0g & \partial_1g & \partial_2g \\ 
\partial_0h & \partial_1h & \partial_2h \\ 
h\partial_0 g  & h\partial_1g  & h\partial_2 g 
\end{pNiceMatrix} + \det \begin{pNiceMatrix} 
\partial_0g & \partial_1g & \partial_2g \\ 
\partial_0h & \partial_1h & \partial_2h \\ 
g\partial_0h  & g\partial_1h  & g\partial_2h  
\end{pNiceMatrix} = 0 ;
\end{align*}
therefore $\Delta \in \ker (\rho_f^{(d-2)})$. 
\end{proof}
The proof of \Cref{thm: ruppert original} in \cite{Rup:ReduziabilitatKurven} and \cite{Schin:PolySpecialRegardRed} is given in the language of differential forms. The condition that is being proved is that $f$ is reducible if and only if there is a closed meromorphic differential form $\omega$ on $\bbP^2$ with poles along the curve $Z(f) \subseteq \bbP^2$. In this language, the element $\Delta$ introduced in the proof of \Cref{prop: ruppert_first_implication_new} is essentially the same as the desired differential form. More precisely, define
\[
\omega_f = \frac{1}{f} [(x_1 \Delta_2 - x_2 \Delta_1) \diff x_0 + (x_2 \Delta_0 - x_0 \Delta_2) \diff x_1  + (x_0 \Delta_1 - x_1 \Delta_0) \diff x_2 )].
\]
By definition $\omega_f$ has poles only along $\smash{Z(f)}$ and one can verify it is closed. In fact, the condition that $\omega_f$ is closed is equivalent the fact that $\smash{\Delta \in \ker (\rho_f^{(d-2)})}$. If $f = gh$ with $g,h$ distinct, then, up to scaling, $\omega_f = \smash{\diff \log (g^{\deg (h)} / h^{\deg(g)})}$, which can be verified by expanding the derivatives and using Euler's formula for homogeneous polynomials. With this formulation, closedness is immediate. 

The condition that $\omega_f$ is a well-defined differential form on $\bbP^2$ is equivalent to the condition that $\Delta \in \fraksl^{(d-3)}(V)$. The correspondence is more general. Consider the classical Euler sequence for the tangent bundle $T\bbP^2$ of $\bbP^2 = \bbP V^*$
\[
\begin{tikzcd}
0 \arrow{r} &\calO_{\bbP^2} \arrow{r} &V^* \otimes \calO_{\bbP^2}(1) \arrow{r} &T \bbP^2 \arrow{r} &0;
\end{tikzcd}
\]
twisting by $\calO_{\bbP^2}(d-1)$ and passing to global sections yields the exact sequence 
\[
\begin{tikzcd}
0 \arrow{r} & H^0(\calO_{\bbP^2}(d-1)) \arrow{r} & V^* \otimes H^0(\calO_{\bbP^2}(d)) \arrow{r} & H^0(T\bbP^2(d)) \arrow{r} & 0
\end{tikzcd}
\]
because $H^1(\calO_{\bbP^2}(d-1)) = 0$ as $d \geq 1$.  This sequence coincides with the sequence \eqref{eq: euler sequence via pieri 2} of vector spaces induced by the differentiation map; in particular 
\[
H^0\bigl(T \bbP^2 (d)\bigr) = \fraksl^{(d-1)}(V).
\]
Dually, the Euler sequence for the cotangent bundle $T^* \bbP^2$ 
\[
0 \to T^* \bbP^2 \to V \otimes \calO_{\bbP^2}(-1) \to \calO_{\bbP^2} \to 0
\]
defines, after twisting by $\calO_{\bbP^2}(d)$ and passing to global sections, the exact sequence
\[
0 \to H^0\bigl(T^* \bbP^2 (d)\bigr) \to V \otimes H^0\bigl(\calO_{\bbP^2}(d-1)\bigr) \to H^0\bigl(\calO_{\bbP^2} (d)\bigr) \to 0
\]
where the second map is the multiplication map $V \otimes S^{d-1} V \to S^{d} V$, showing that $\smash{H^0(T^* \bbP^2 (d))}$ is isomorphic to the Schur module $\smash{\bbS_{(d,1)}V}$. Under this duality one can see that $\Delta \in \fraksl^{(d-1)}(V)$ if and only if $\omega_f \in \bbS_{(d,1)}V$.  
In particular, \Cref{prop: ruppert_first_implication_new} can be regarded as a reinterpretation of the fact that if $f$ is reducible then there is a meromorphic closed differential form on $\bbP^2$ with poles along $Z(f)$. The reverse implication is not straightforward: the proof of \cite{Schin:PolySpecialRegardRed} is a direct calculation in local coordinates; a proof using the geometry of the sheaf of logarithmic differentials, or a related cohomological construction, would shed some light on the potential of similar methods for higher strength.

\subsection{Set-theoretic equations for reducible forms in higher number of variables}

We use \Cref{thm: ruppert original}, together with a Bertini type argument, to obtain set-theoretic equations for the variety $\calR^\bullet_{n,d}$ of reducible forms for every $n$.

\begin{theorem}\label{thm: ruppert any vars}
    Let $\dim V = n+1$ and $f \in S^{d} V$. The following statements are equivalent:
    \begin{enumerate}[label=(\roman*)]
    \item $f$ is reducible;
    \item for a generic subspace $E \subseteq V^*$ with $\dim E = 3$, the restriction $f|_E \in S^{d} E^*$ is reducible;
    \item for every subspace $E \subseteq V^*$ with $\dim E = 3$, the restriction $f|_E \in S^{d} E^*$ is reducible.
    \end{enumerate}
Moreover, there exist finitely many $E_1 \vvirg E_N \subseteq V^*$ linear subspaces with $\dim E_j = 3$, for every $j=1,\dots,N$, such that the ideal 
\[
I = I_{d^2-1}(\rho_{f|_{E_1}}^{(d-2)}) + \cdots +  I_{d^2-1}(\rho_{f|_{E_N}}^{(d-2)})
\] 
cuts out $\calR_{n,d}^\bullet$ set-theoretically.
\end{theorem}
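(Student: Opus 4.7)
The plan is to reduce the characterization of reducibility in higher number of variables to the ternary case handled by \Cref{thm: ruppert original}, via a Bertini-type argument; the finiteness statement will then follow from noetherianity of the polynomial ring and the $\GL(V)$-invariance of Ruppert's equations.

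The implication (i)~$\Rightarrow$~(iii) is straightforward: if $f = gh$ with $g \in S^{k}V$ and $h \in S^{d-k}V$, then for every $E \subseteq V^*$ the restriction $f|_E = g|_E \cdot h|_E$ is either zero (trivially reducible) or a genuine product of homogeneous polynomials of positive degree in $S^d E^*$. The implication (iii)~$\Rightarrow$~(ii) is immediate.

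The main content is (ii)~$\Rightarrow$~(i), which I would prove by contrapositive. The case $\dim V = 3$ is covered by \Cref{thm: ruppert original}, so assume $\dim V \geq 4$. If $f$ is not reducible in $\Sym V$, then its factorization into irreducibles must consist of a single factor of multiplicity one, since any factorization $f = c \cdot p_1^{a_1} \cdots p_s^{a_s}$ with $s \geq 2$ or some $a_i \geq 2$ would itself be a reducible decomposition. Consequently $Z(f) \subseteq \bbP V^*$ is an irreducible hypersurface of degree $d$ along which $f$ vanishes with multiplicity one. Iterated application of Bertini's theorem for linear sections of irreducible varieties then shows that, for a generic $3$-dimensional $E \subseteq V^*$, the intersection $Z(f) \cap \bbP E$ is an irreducible plane curve, cut out transversally, so that $f|_E$ is an irreducible ternary form, contradicting (ii).

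For the last claim, consider the $\GL(V)$-invariant ideal
\[
\scrJ \coloneqq \sum_{E \in \Gr(3,V^*)} I_{d^2-1}\bigl(\rho_{f|_E}^{(d-2)}\bigr) \subseteq \bbC[S^d V].
\]
By the equivalence (i)~$\Leftrightarrow$~(ii) already established, the zero locus of $\scrJ$ is exactly $\calR_{n,d}^\bullet$. Hilbert's basis theorem guarantees that $\scrJ$ is finitely generated, and any finite system of generators involves minors coming from only finitely many subspaces $E_1 \vvirg E_N \in \Gr(3,V^*)$; these $E_j$ then satisfy the claim. The main obstacle is the transversality step in the Bertini argument: classical Bertini guarantees only that the generic plane section of $Z(f)$ is an irreducible subvariety, but to conclude that the restricted \emph{polynomial} $f|_E$ is itself irreducible one must ensure that the generic intersection preserves the multiplicity of $f$ along $Z(f)$, which amounts to verifying generic transversality. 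All other steps reduce to standard noetherianity and ring-theoretic manipulations.
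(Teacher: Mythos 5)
Your argument is correct and is essentially the paper's proof: (i)$\Rightarrow$(iii) by restricting a factorization, (ii)$\Rightarrow$(i) by Bertini applied to the irreducible hypersurface $Z(f)$, and the finiteness claim by noetherianity of $\bbC[S^dV]$ together with the $\GL(V)$-orbit of Ruppert's minors. The one point you flag as an obstacle---that irreducibility of $Z(f)\cap \bbP E$ as a variety does not by itself give irreducibility of the polynomial $f|_E$---is real but standard to close: since $f$ is irreducible it is squarefree, so $Z(f)$ is reduced, and a generic linear section of a reduced hypersurface is again reduced (generic smoothness plus the absence of embedded points on a hypersurface), whence $f|_E$ is squarefree with irreducible zero locus and therefore irreducible; the strong form of Bertini in Jouanolou that the paper cites yields this conclusion directly.
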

\begin{proof}
$(i) \Rightarrow (iii)$. Let $f = gh$, with $g\in S^kV$ and $h\in S^{d-k}V$ for some $k \geq 1$. Let $E \subseteq V^*$ be a linear space with $\dim E = 3$. Then
\[
f|_E=g|_E\cdot h|_E,
\]
showing that $f|_E$ is reducible. 

The implication $(iii) \Rightarrow (ii)$ is clear.

$(ii) \Rightarrow (i)$.  
We show that if $f$ is irreducible and $E \subseteq V^*$, with $\dim E = 3$, is a generic linear space, then $f|_E$ is irreducible. This is a consequence of Bertini's Theorem \cite[Corollary 6.11(3)]{Jou:Bertini}, applied to the hypersurface $Z(f) \subseteq \bbP V^*$: such hypersurface is irreducible by assumption, hence a generic linear section is irreducible as well. 

The second part of the statement follows by noetherianity. For every choice of an element $E \in \Gr(3,V^*)$, the minors of size $d^2 - 1$ of $\smash{\rho^{d-2}_{f|_E}}$ are polynomials in the coefficients of the polynomial $f$. As $E$ varies in the Grassmannian $\Gr(3,V^*)$, the set of these equations defines $\calR^\bullet_{n,d}$ set-theoretically. By noetherianity, finitely many equations, that is, finitely many restrictions $E$, suffice.
\end{proof}

\Cref{thm: ruppert any vars} is a quantitative version of \Cref{thm: topological noeth}, as explained in \Cref{subsec: functors}. It shows that set-theoretic equations for the variety of reducible forms in $n+1$ variables arise as pullback of the equations of such variety for three variables; in the notation of \Cref{thm: topological noeth}, this shows that $n_0 = 3$ for the $\bfVec$-variety $\calR^\bullet_{-,d}$. The same argument shows that the equivalence of the three statements in \Cref{thm: ruppert any vars} holds for each component $\calR^k_{n,d}$ of $\calR^\bullet_{n,d}$: the fact that $f$ has a factor of degree $k$ can be checked on the restriction of $f$ to generic subspaces of dimension three.

A natural question concerns upper bounds on the number $N$ of restrictions required to obtain a system of set-theoretic equations for $\smash{\calR_{n,d}^\bullet}$ from the determinantal ideals of the restrictions $f|_{E_i}$ with $i = 1 \vvirg N$, or, even better, explicit choices of restrictions $E_1 \vvirg E_N$ yielding such equations. This problem is related to the study of algebraic matroids and identifiability in compressed sensing, see, e.g., \cite{LM24,GGU25}. We do not address it in this paper; we record some preliminary observations following from an immediate parameter count in the first non-trivial case.
\begin{remark}
When $(n,d)=(2,3)$, a direct calculation with the support of a computer algebra software, shows that the space of Ruppert's equations $I_{d^2-1}(\rho^{(d-3)}_f)$ in degree $8 = d^2-1$ is a copy of the module 
\[
\bbS_{(10,9,5)} \bbC^3 \subseteq S^8 S^3 \bbC^3;
\]
here $\bbS_{(10,9,5)} \bbC^3$ denotes a specific copy of the Schur module of weight $(10,9,5)$ in the \emph{plethysm} $S^8S^3 \bbC^3$, see, e.g., \cite[Lecture 11]{FultonHarris}.
We have 
\[
\dim \bbS_{(10,9,5)} \bbC^3 = 35;
\]
so this is a system of $35$ equations cutting out set-theoretically the subvariety $\calR^1_{2,3}$ of codimension $2$ in $\bbP S^3 \bbC^3 = \bbP^9$. 

Let $\dim V = n+1$ and consider the variety $\calR^1_{n,d} \subseteq \bbP S^3 V$. Every restriction to a generic subspace $E \subseteq V^*$ with $\dim E = 3$ contributes a system of $35$ equations to $\smash{I(\calR^1_{n,3})}$, and it cuts out set-theoretically a variety of codimension $2$, which is a cone over a corresponding $\calR^1_{2,3}$ lying in a subspace. Since, by \Cref{thm: ruppert any vars}, the base locus of all such equations is $\calR^1_{n,d}$, we expect $\lceil  (\dim \bbS_{(10,9,5)} \bbC^{n+1})/ 35 \rceil$ suffice to generate the whole module, and $\lceil (\dim S^3 \bbC^{n+1}) / 2 \rceil$ are sufficient to cut out $\calR^1_{n,d}$ set-theoretically. These claims are however hard to verify explicitly.
\end{remark}
For degree higher than three, the system $I_{d^2-1}(\rho^{(d-3)}_f)$ is not an irreducible $\GL(V)$-module. In principle, different components have, individually, base locus larger than the sole $\smash{\calR^\bullet_{n,d}}$ and therefore equations belonging to different irreducible modules could contribute differently to the system of equations for $\calR^\bullet_{n,d}$ in $S^d \bbC^{n+1}$. 

We conclude this section drawing some connections between Ruppert's equation and algebraic properties of the Jacobian ideal of a reducible polynomial $f$.
\begin{remark}\label{rmk: syz}
From the point of view of commutative algebra, we observe that Ruppert's equations from \Cref{thm: ruppert original}, and so those from \Cref{thm: ruppert any vars}, detect {\it unexpected syzygies} of the Jacobian ideal of $f$. From this perspective, Ruppert's equations are a refined version of the equations arising from \Cref{prop: singular locus}. More precisely, \Cref{prop: singular locus} simply states that if $f$ is reducible then $\Sing(Z(f))$ has codimension at most two. So, when $n\geq 2$, the Jacobian ideal $J = (\partial_0 f \vvirg \partial_n f)$ is not a complete intersection of codimension $n+1$. Therefore, it has extra syzygies, in addition to the Koszul syzygies. In fact, $\codim \Sing(Z(f)) \leq 2$ guarantees that every $3$-dimensional subspace $H \subseteq J_{d-1}$ of first order derivatives of $f$ has syzygies besides the Koszul relations. \Cref{thm: ruppert any vars} describes further properties of these additional syzygies: for every subspace $H \subseteq J_{d-1} \subseteq S^{d-1}V$ with $\dim H = 3$, the ideal generated by $H$ has at least one syzygy in degree $d-2$ and such a syzygy belongs to the subspace
\[
\fraksl^{(d-3)}(V) \subseteq S^{d-2}V \otimes V^*.
\]
Because of its degree, this must be independent from the Koszul syzygies, which are in degree $d-1$. The upshot of \cite{Rup:ReduziabilitatKurven} is that this necessary condition is also sufficient.
\end{remark}

\subsection{Isotropy groups and Ruppert's equations for cubic polynomials}

The isotropy group of a homogeneous polynomial $f \in S^d V$ is its stabilizer under the action of $\GL(V)$: 
\[
\Stab_{\GL(V)}(f) = \{ A \in \GL(V) : A \cdot f = f \}.
\]
Depending on the source, $\Stab_{\GL(V)}(f)$ is also called the \textit{symmetry group of $f$} \cite[Section~4.1.2]{Lan17}, or the \textit{linear preserver subgroup of $f$} \cite{LP01,GHL25}. By definition, the group $\Stab_{\GL(V)}(f)$ is a closed subgroup of $\GL(V)$; denote by $\Stab^\circ_{\GL(V)}(f)$ its \emph{identity component}, which is the unique connected (irreducible) component containing the identity and it is a normal subgroup of $\Stab_{\GL(V)}(f)$, see, e.g., \cite[Ch.I, Section~1.2]{Borel:LinAlgGp}. We use the notation $\Stab_{G}(f)$ to indicate the stabilizer of $f$ in the group $G$.

The Lie algebra of $\Stab_{\GL(V)}(f)$ is a subalgebra of $\frakgl(V) \simeq V^* \otimes V$ and its dimension coincides with 
\[
\dim \Stab_{\GL(V)}(f) = \dim \Stab^\circ_{\GL(V)}(f).
\]
For every $f \in S^d V$, one has a natural {\it orbit map}
\[
\begin{tikzcd}[row sep=0pt,column sep=1pc]
 \gamma\colon \GL(V)\arrow{r} & S^d V \\
  {\hphantom{\gamma\colon{}}}  A \arrow[mapsto]{r} & A \cdot f.
\end{tikzcd}
\]
The fiber of $\gamma$ over $f$ is, by definition, $\Stab_{\GL(V)}(f)$. The differential of $\gamma$ at the identity is given by the Lie algebra action
\[
\begin{tikzcd}[row sep=0pt,column sep=1pc]
\diff \gamma\colon \frakgl(V)\arrow{r} & S^d V \\
  {\hphantom{\diff \gamma\colon{}}}  X \arrow[mapsto]{r} & X. f,
\end{tikzcd}
\]
where we identify $S^d V$ with the tangent space $T_f S^d V$. The Lie algebra of $\Stab_{\GL(V)}(f)$ is the kernel of $\diff \gamma$. In particular, $\dim \Stab_{\GL(V)}(f) > 0$ if and only if $\diff \gamma$ is not injective. 

In the case $(d,n) = (3,2)$, the prolongation $\fraksl^{(d-3)}(V)$ coincides with the algebra $\fraksl(V)$ and the Ruppert map $\rho_f^{(1)} : \fraksl(V) \to S^3 V$ is given by the Lie algebra action on $f$. This yields the following consequence of \Cref{thm: ruppert original}.
\begin{corollary}\label{corol: stabilizer plane cubics}
Let $f \in S^3 \bbC^3$. Then $f$ is reducible if and only if the stabilizer of $f$ under the action of $\SL(V)$ has positive dimension.    
\end{corollary}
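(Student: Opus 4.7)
The plan is to observe that, in the case $(d,n) = (3,2)$, Ruppert's Theorem \ref{thm: ruppert original} is essentially already a statement about the stabilizer of $f$, once one identifies the Ruppert map with the differential of the $\SL(V)$-orbit map at the identity.

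First, I would specialize \Cref{thm: ruppert original} to $(n,d) = (2,3)$. The relevant prolongation degenerates to $\fraksl^{(d-3)}(V) = \fraksl^{(0)}(V) = \fraksl(V)$, of dimension $d^2 - 1 = 8$. The theorem then asserts that $f \in S^3 \bbC^3$ is reducible if and only if the Ruppert map
\[
\rho_f^{(1)} \colon \fraksl(V) \to S^{3} V, \qquad \sum_{i=0}^2 g_i \otimes \partial_i \mapsto \sum_{i=0}^2 g_i \partial_i f,
\]
fails to be injective.

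Next, I would match $\rho_f^{(1)}$ with the differential at the identity of the orbit map $\gamma \colon \SL(V) \to S^3 V$, $A \mapsto A \cdot f$. Under the identification $\fraksl(V) \subseteq V \otimes V^{*}$, an element $X = \sum g_i \otimes \partial_i$ acts on $f$ as the traceless linear vector field $\sum g_i \partial_i$, so $X \cdot f = \sum g_i \partial_i f = \rho_f^{(1)}(X)$. Hence $\rho_f^{(1)}$ coincides with $\diff \gamma|_{\fraksl(V)}$, and its kernel is the Lie algebra of $\Stab_{\SL(V)}(f)$.

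Finally, I would conclude by standard algebraic group theory: since $\Stab_{\SL(V)}(f)$ is a closed subgroup of $\SL(V)$, its dimension equals the dimension of its Lie algebra, i.e.\ the dimension of $\ker \rho_f^{(1)}$. Thus $\dim \Stab_{\SL(V)}(f) > 0$ if and only if $\rho_f^{(1)}$ is not injective, which by the first step is equivalent to $f$ being reducible. There is no real obstacle here: the corollary is a direct interpretation of \Cref{thm: ruppert original} in the smallest non-trivial case, where the Ruppert map reduces to the infinitesimal action of $\fraksl(V)$ on $f$.
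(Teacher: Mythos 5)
Your proof is correct and is essentially identical to the paper's: the paper also obtains the corollary by specializing \Cref{thm: ruppert original} to $(d,n)=(3,2)$, noting that $\fraksl^{(d-3)}(V)=\fraksl(V)$ and that $\rho_f^{(1)}$ is the Lie algebra action, i.e.\ the differential of the orbit map, whose kernel is the Lie algebra of $\Stab_{\SL(V)}(f)$.
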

In fact, \Cref{corol: stabilizer plane cubics} can also be verified using the classification of plane cubics, for example following \cite{KogMaz}. We expand on this in \Cref{rmk: plane cubics}.

The following result uses a lower bound on the dimension of the isotropy group of reducible cubics to obtain equations for $\calR^1_{n,3}$. Note that since $d =3$, $\calR^\bullet_{n,3} = \calR^1_{n,3}$.
\begin{proposition}
\label{prop: stabilizer cubics}
  Let $V$ be a vector space of dimension $n+1$.  If $f \in S^3 V$ is reducible, then 
  \[
  \rank(\rho_f^{(1)}) \leq \frac{n(n+5)}{2}
  \] 
  and equality holds for generic $f$ in $\calR^1_{n,d}$. Moreover, if $n=2$, the converse holds as well.
\end{proposition}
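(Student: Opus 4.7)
The plan is to exploit the identification of $\rho_f^{(1)}$ with the Lie algebra action. For $d = 3$ and $e = 1$, the map $\rho_f^{(1)} \colon \fraksl(V) \to S^3 V$ sends $X = \sum_i g_i \otimes \partial_i$ to $\sum_i g_i \partial_i f = X.f$, which is precisely the infinitesimal action of $X$ on $f$. Consequently,
\[
\ker \rho_f^{(1)} \;=\; \fraksl(V) \cap \Lie\bigl(\Stab_{\GL(V)}(f)\bigr),
\]
so the bound on $\rank \rho_f^{(1)}$ translates to a lower bound on the dimension of this stabilizer.

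Any reducible cubic factors as $f = \ell q$ with $\ell \in V$ and $q \in S^2 V$, and $\calR^\bullet_{n,3} = \calR^1_{n,3}$ is the closure of the $\GL(V)$-orbit of such a generic product. For generic $(\ell, q)$ with $\ell \nmid q$, the relation
\[
X.f = (X.\ell)\, q + \ell\,(X.q) = 0
\]
forces $X.\ell = \lambda \ell$ and $X.q = -\lambda q$ for some $\lambda \in \bbC$. Writing $X = X_0 + \mu \id$ with $X_0 \in \fraksl(V)$, and using $\id.\ell = \ell$ and $\id.q = 2q$, these become $X_0.\ell = (\lambda - \mu)\ell$ and $X_0.q = -(\lambda + 2\mu) q$. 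I then verify two facts: for $q$ non-degenerate, any traceless $X_0$ with $X_0.q \in \bbC q$ must lie in $\frako(V,q)$ (so $X_0.q = 0$); and for $\ell$ non-isotropic with respect to $q$, any $X_0 \in \frako(V,q)$ with $X_0.\ell \in \bbC \ell$ must satisfy $X_0.\ell = 0$, because orthogonal flows preserve the value $q(\ell) \neq 0$. Together these force $\lambda = \mu = 0$, so that for generic $f$,
\[
\ker \rho_f^{(1)} \;=\; \{X_0 \in \frako(V, q) : X_0.\ell = 0\} \;\cong\; \frako(n),
\]
of dimension $\binom{n}{2}$. This gives $\rank \rho_f^{(1)} = \dim \fraksl(V) - \binom{n}{2} = \frac{n(n+5)}{2}$ generically, and lower semicontinuity of matrix rank extends the inequality to every $f \in \calR^1_{n,3}$.

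For the converse when $n = 2$, the observation is that $\dim \fraksl(\bbC^3) = 8 = d^2-1$ while $\frac{n(n+5)}{2} = 7$, so the bound $\rank \rho_f^{(1)} \leq 7$ is equivalent to the strict inequality $\rank \rho_f^{(1)} < 8$, which by \Cref{thm: ruppert original} is exactly the reducibility of $f$. The main obstacle I anticipate is the careful justification that the scalar $\mu$ in the decomposition $X = X_0 + \mu \id$ is forced to vanish for generic $(\ell, q)$; this hinges on the non-degeneracy of $q$ and the non-isotropy of $\ell$, and also clarifies why the converse is restricted to $n = 2$: for larger $n$, non-reducible cubics can satisfy the same rank bound.
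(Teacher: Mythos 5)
Your argument is correct and follows essentially the same route as the paper: both reduce the claim to computing the annihilator of a generic product $\ell q$ and identifying it with the stabilizer of a non-isotropic $\ell$ inside the orthogonal algebra of a non-degenerate $q$, yielding a copy of $\frakso(n)$ of dimension $\binom{n}{2}$, with the $n=2$ converse deduced from \Cref{thm: ruppert original}. The only (harmless) difference is that you work infinitesimally and track the scalars $\lambda,\mu$ explicitly, whereas the paper argues at the group level via unique factorization and identity components.
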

\begin{proof}
Let $f\in S^3 V$ be a generic cubic in $\calR^1_{n,d}$. Then $f = \ell q $ for some $\ell\in S^1 V$ and $q\in S^2 V$. By genericity $q$ is a full-rank quadratic form. As observed before
\[
\rho_f^{(1)} : \fraksl(V) \to S^{d}V
\]
coincides with the Lie algebra action of $\fraksl(V)$ on $f$. Let $\frakg_f$ be the Lie algebra of $\Stab_{\GL(V)}(f)$. The desired statement is equivalent to the the fact that 
\[
\dim ( \frakg_f \cap \fraksl(V)) \geq (n+1)^2-1 -\frac{n(n+5)}{2} = \binom{n}{2}.
\]
By the discussion above, this is equivalent to the fact that $\Stab^\circ_{\SL(V)}(f)$ has dimension (at least) $\binom{n}{2}$. By unique factorization and the genericity of $\ell,q$, if $A \in \GL(V)$ satisfies $A \cdot f = f$ then $A \cdot \ell = \ell$ and $A \cdot q = q$, up to scaling. Let $\SO(q)$ be the orthogonal group stabilizing the quadric $q$. We deduce 
\[
\Stab^\circ_{\SL(V)}(f) = \Stab^\circ_{\SO(q)}(\ell).
\]
This is the stabilizer in the orthogonal group of a generic hyperplane: by construction, this is an orthogonal subgroup acting on such hyperplane, that is a copy of $\SO(n)$. We obtain $\dim \Stab^\circ_{\SL(V)}(f) =\binom{n}{2}$ and this concludes the proof of the first part.

The second part is a restatement of \Cref{corol: stabilizer plane cubics}.
\end{proof}
From \Cref{prop: stabilizer cubics}, we deduce that the minors of size $\frac{n(n+5)}{2} + 1$ of $\rho_f^{(1)}$ give equations for $\calR^1_{n,3}$. It is not immediately clear whether these equations belong to the ideal generated by the equations described in \Cref{thm: ruppert any vars}. We provide a partial converse of \Cref{prop: stabilizer cubics}.
\begin{proposition}\label{implication on reducible}
Let $V$ be a space with $\dim V = n+1$ and let $f\in S^3 V$. If $\Stab_{\SL(V)}^{\circ}(f)$ is isomorphic to $\SO(n)$, then $f$ is reducible. 
\end{proposition}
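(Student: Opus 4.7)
The plan is to exploit the hypothesis $H := \Stab_{\SL(V)}^\circ(f) \simeq \SO(n,\bbC)$ to pin down the $H$-module structure on $V$, then compute $(S^3V)^H$ explicitly and observe that every element in it is divisible by a linear form, hence reducible.

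I would first dispose of the small cases. For $n = 1$, every cubic in $S^3\bbC^2$ splits into linear factors over $\bbC$. For $n = 2$, the hypothesis gives $\dim H = \dim\SO(2,\bbC) = 1 > 0$, so \Cref{corol: stabilizer plane cubics} applies directly. From here on I would assume $n \geq 3$.

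The inclusion $H \hookrightarrow \SL(V)$ endows $V$ with a faithful representation of $\SO(n,\bbC)$ of dimension $n+1$, and since $\SO(n,\bbC)$ is reductive, $V$ decomposes as a direct sum of irreducible $H$-modules. The key input I intend to use is the classification of small representations of $\SO(n,\bbC)$ for $n \geq 3$: every non-trivial irreducible representation has dimension at least $n$, and the unique irreducible of dimension exactly $n$ is the standard vector representation $W$. Combined with $\dim V = n+1$ and faithfulness, this forces a decomposition $V \simeq W \oplus L$ with $L$ a one-dimensional trivial summand. Let $v$ span $L$ and let $Q \in S^2 W$ denote the invariant quadric coming from the $H$-invariant non-degenerate bilinear form on $W$ via the self-duality $W \simeq W^*$.

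I would then compute $(S^3V)^H$ from the decomposition
\[
S^3 V = S^3 W \oplus S^2 W \cdot v \oplus W \cdot v^2 \oplus \bbC v^3,
\]
using the classical description $(S^k W)^{\SO(n)} = \bbC \cdot Q^{k/2}$ for $k$ even and $0$ for $k$ odd, together with $W^{\SO(n)} = 0$. This yields $(S^3V)^H = \bbC \cdot v^3 \oplus \bbC \cdot Qv$, and since $f$ is $H$-invariant, it can be written $f = \alpha v^3 + \beta Q v = v(\alpha v^2 + \beta Q)$ for some $\alpha,\beta \in \bbC$, which is reducible.

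The main obstacle I anticipate is the representation-theoretic classification step identifying the $H$-module structure on $V$. While the bound on the smallest faithful representation is routine for $n \geq 5$, the cases $n = 3$ and $n = 4$—where $\SO(n,\bbC)$ is isogenous to $\SL_2$ or to $\SL_2 \times \SL_2$—must be verified directly by inspecting the list of low-dimensional representations; in particular, one must exclude the existence of an irreducible $(n+1)$-dimensional faithful representation, which fails in each case because of parity constraints on the irreducibles of $\SO(n,\bbC)$ that factor through $\SO(n)$ rather than through $\mathrm{Spin}(n)$.
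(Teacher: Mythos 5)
Your proof is correct, and the first half coincides with the paper's: both arguments use the hypothesis to make $V$ a faithful $\SO(n)$-module and invoke the classification of low-dimensional representations to force $V \simeq W \oplus \bbC$ with $W$ the standard representation. Where you diverge is in extracting the invariants of $S^3V$: the paper introduces the auxiliary nondegenerate quadric $q = x_0^2 + q_0$ on all of $V$, decomposes $S^3V = \calH_3 \oplus qV$ as an $\SO(n+1)$-module, and then uses branching rules from $\SO(n+1)$ to $\SO(n)$ to see that the invariant part is spanned by $x_0^3$ and $x_0 q$. You instead compute $(S^3V)^{\SO(n)}$ directly from $S^3(W\oplus L) = S^3W \oplus S^2W\cdot v \oplus W\cdot v^2 \oplus \bbC v^3$ together with $(S^kW)^{\SO(n)} = \bbC\, Q^{k/2}$ for $k$ even and $0$ otherwise. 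This is more elementary, avoids the branching-rule input entirely, and reaches the same normal form $f = v(\alpha v^2 + \beta Q)$. Your separate treatment of $n=1,2$ is also a genuine improvement in rigor: for $n=2$ the group $\SO(2,\bbC)\simeq\bbC^*$ is not semisimple and its standard representation splits over $\bbC$, so the "smallest faithful representation" argument does not apply verbatim, and falling back on \Cref{corol: stabilizer plane cubics} is the right move. One small caveat: your closing remark attributes the exclusion of a faithful irreducible $(n+1)$-dimensional representation for $n=3,4$ solely to parity (i.e.\ factoring through $\SO(n)$ rather than $\mathrm{Spin}(n)$); that is the correct reason for $n=3$, but for $n=4$ the five-dimensional irreducible $V_4\boxtimes V_0$ of $(\SL_2\times\SL_2)/\{\pm1\}$ does descend to $\SO(4,\bbC)$ and is ruled out by faithfulness, not parity, as are the three-dimensional summands $\Lambda^2_\pm$. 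This does not affect the conclusion, but the case check should be stated accordingly.
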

\begin{proof}
The isomorphism $\Stab^\circ_{\SL(V)}(f) \simeq \SO(n)$ defines an injective homomorphism $\varphi \colon \SO(n)\to \GL(V)$ which makes $V$ into a faithful $\SO(n)$-representation.

The only irreducible representations of $\SO(n)$ of dimension at most $n+1$ are either $1$-dimensional, with $\SO(n)$ acting trivially, or $n$-dimensional, with $\SO(n)$ acting in the standard way on $\bbC^n$. This can be proved as follows. By Weyl dimension formula \cite[Corollary~24.6]{FultonHarris}, every irreducible representation is larger, in dimension, than a corresponding fundamental representation. So it suffices to prove that the claim holds for fundamental representations. The fundamental weights for $\frakso(n)$ correspond to the exterior powers $\Lambda^k \bbC^n$ of the standard representations with $k \leq n/2$, or the spin representations. The exterior powers have dimension larger than $n+1$ except for $k=0,1$; the spin representations do not yield representations for the group $\SO(n)$ \cite[Chapter~18]{FultonHarris}. Therefore all irreducible representation except $\bbC^1 = \Lambda^0 \bbC^n$ and $\bbC^n = \Lambda^1\bbC^n$ have dimension at least $n+2$. A direct combinatorial argument by counting \emph{Gelfand-Tsetlin fillings} of suitable \emph{Bratteli diagrams} is also possible; we refer to \cite[Section~2.3 and Section~4.1]{BryGes} for an explanation.

Since $V$ is a faithful representation, we deduce $V = \bbC^1 \oplus \bbC^n$, with $\SO(n)$ acting trivially on $\bbC^1$ and in the standard way on $\bbC^n$. In particular, $\SO(n)$ is realized as the stabilizer of a quadric $q_0 \in S^2 \bbC^n \subseteq S^2 V$.

Explicitly, after possibly changing coordinates, we may assume that $q_0=x_1^2+\cdots+x_n^2$ so that
\[
\Stab^\circ_{\SL(V)}(f)=\begin{pmatrix}
1 & 0 \\
0 & \SO(n) \\
\end{pmatrix}\subseteq \SO(V)\subseteq \SL(V);
\]
in particular the quadric $q=x_0^2+q_0$ is stabilized by a copy of $\SO(q) \supseteq \SO(q_0)$. Write $\SO(V) = \SO(q)$.

Now, $\SO(V)$ acts on $S^3 V$, which decomposes into direct sum of irreducible modules
\begin{equation}
\label{eq:split}
S^3 V = \mathcal{H}_3 \oplus q\mathcal{H}_1=\mathcal{H}_3 \oplus qV, 
\end{equation}
where $\mathcal{H}_3$ is the space of degree three harmonic homogeneous polynomials and $\mathcal{H}_1 = V$, see \cite[Corollary 5.6.12]{GW09} or \cite[Proposition 3.11]{Fla24} for a direct proof.
Therefore, we have 
\[
f = h_3 + q h_1,
\]
for certain unique $h_1\in V$ and $h_3\in \mathcal{H}_3$. By uniqueness of the decomposition, both $h_3$ and $qh_1$ must be stabilized by $\smash{\Stab_{\SL(V)}^\circ(f)}$. The only linear form which is stabilized by $\Stab_{\SL(V)}^\circ(f) = \SO(n)$ is a multiple of $x_0$. Therefore, we may assume that $h_1= \lambda x_0$. Moreover, since $h_3$ is also stabilized by $\Stab^\circ_{\SL(V)}(f)$, which is contained in $\SO(V)$, we have  
\[
\SO(n) \subseteq \Stab^\circ_{\SO(V)}(h_3).
\] 
By the branching rules for the restriction from $\SO(n+1)$ to $\SO(n)$, see, e.g., \cite[Section~8.3]{GW09}, one verifies that $\calH_3$ contains a unique, up to scaling, $\SO(n)$-invariant, that is $h_3 = x_0^3$. We conclude 
\[
f = \mu x_0^3 + \lambda x_0 q = x_0 (\mu x_0^2 + \lambda q)
\]
 as desired.
\end{proof}

We conclude with a remark on the isotropy group of plane cubics.
\begin{remark}\label{rmk: plane cubics}
    The elements of $S^3 \bbC^3$ are completely classified up to the action of $\GL_3$ as follows. There is a continuous $1$-dimensional family of smooth cubics, uniquely determined by a single invariant, and seven additional orbits. All smooth cubics, and the one whose singularity is a simple node have $0$-dimensional isotropy group. The cuspidal cubic has a $1$-dimensional isotropy group, whose intersection with $\SL_3$ is $0$-dimensional. The generic reducible cubic decomposes as the union of a conic and a generic line: then, as in \Cref{prop: stabilizer cubics}, there is a copy of $\SO(2)$ contained in the isotropy group and in $\SL_3$. All other cubics are reducible and contained in the orbit-closure of the generic reducible cubic, hence the intersection of their isotropy group with $\SL_3$ is positive-dimensional. This recovers Ruppert's result, showing that a plane cubic is reducible if and only if the intersection between its isotropy group and $\SL_3$ has positive dimension.
\end{remark}

\section{Determinantal equations for higher strength}\label{sec: more equations}

In this section, we generalize Ruppert's equations from \Cref{thm: ruppert original} providing equations for the variety $\sigma_r(\calR^\bullet_{n,d})$ for every $n,d$ when $n \geq 2r$. We point out that in this range \Cref{prop: singular locus} already gives equations for $\sigma_r(\calR^\bullet_{n,d})$ because forms of strength $r$ in $2r+1$ variables are singular. However, we expect the equations introduced in this section to be independent from the ones arising by the non-emptiness of the singular locus: in a way, they are a refinement of that condition, similarly to the situation described in \Cref{rmk: syz}. We will prove this for the case $(r,d) = (2,3)$ in \Cref{thm: 11 syzygies for slrk 2}.

Fix $r$, let $n=2r$ and $V$ be a vector space with $\dim V = n+1$. Given homogeneous polynomials $g_1 \vvirg g_r,h_1 \vvirg h_r$ with $\deg(g_i) + \deg(h_i) = d$, consider the following matrix of size $(2r+1) \times (2r+1)$
\[
M\coloneqq\begin{pNiceMatrix}[columns-width = auto,cell-space-limits=3pt,xdots/shorten = 0.5 em]
\partial_0 g_1 &  \Cdots  & \partial_{2r} g_1 \\
\Vdots &\Ddots & \Vdots \\
\partial_0 g_r &  \Cdots  & \partial_{2r} g_r \\
\partial_0 h_1 &  \Cdots & \partial_{2r} h_1 \\
\Vdots & \Ddots & \Vdots \\
\partial_0 h_r &  \Cdots & \partial_{2r} h_r \\
\partial_0 &  \Cdots& \partial_{2r} \\
\end{pNiceMatrix}
\]
and the tensor
\[
\Delta \coloneqq \det M \in S^{r(d-2)} V \otimes V^*,
\]
arising, as in the proof of \Cref{prop: ruppert_first_implication_new}, from the Laplace expansion of the determinant with tensor products between the last row and the others.

\begin{lemma}\label{lemma: Delta is in prolong sl}
 For every $g_1 \vvirg g_r$ and every $h_1 \vvirg h_r$ with $\deg(g_i) + \deg(h_i) = d$, we have 
 \[
 \Delta \in \fraksl^{(r(d-2)-1)}(V) \subseteq  S^{r(d-2)}V \otimes V^*.
 \] 
\end{lemma}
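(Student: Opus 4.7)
The plan is to verify directly the defining condition of the prolongation $\fraksl^{(r(d-2)-1)}(V)$, namely that
\[
\sum_{i=0}^{2r} \partial_i \Delta_i = 0 \qquad \text{in } S^{r(d-2)-1}V,
\]
where $\Delta = \sum_i \Delta_i \otimes \partial_i$. By Laplace expansion along the last row of $M$, we have $\Delta_i = (-1)^i \det(A^{(i)})$, where $A$ denotes the $2r\times(2r+1)$ matrix of first partial derivatives of $g_1,\ldots,g_r,h_1,\ldots,h_r$ and $A^{(i)}$ is obtained from $A$ by deleting the $i$-th column; here the sign $(-1)^{2r+i}$ simplifies to $(-1)^i$ since $n = 2r$ is even.

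The most transparent way I would carry out this verification is via exterior calculus on $\bbA^{2r+1}$. Set $f_1 = g_1,\ldots,f_r = g_r,\, f_{r+1} = h_1,\ldots,f_{2r} = h_r$ and consider the $1$-forms $\omega_k = \diff f_k = \sum_{j=0}^{2r}\partial_j f_k \diff x_j$. By standard multilinear algebra, the $2r$-form $\Omega \coloneqq \omega_1 \wedge \cdots \wedge \omega_{2r}$ admits the expansion
\[
\Omega = \sum_{i=0}^{2r}\det(A^{(i)})\, \diff x_0 \wedge \cdots \wedge \widehat{\diff x_i} \wedge \cdots \wedge \diff x_{2r}.
\]
Since each $\omega_k$ is exact, $\Omega$ is closed, so $\diff\Omega = 0$. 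On the other hand, computing $\diff\Omega$ directly via the Leibniz rule, only the $\diff x_i$-component of $\diff \det(A^{(i)})$ contributes to $\diff x_0\wedge\cdots\wedge \diff x_{2r}$, and moving $\diff x_i$ into its natural position introduces a sign $(-1)^i$. Equating coefficients of the top form yields
\[
\sum_{i=0}^{2r} (-1)^i \partial_i \det(A^{(i)}) = \sum_{i=0}^{2r} \partial_i \Delta_i = 0,
\]
which is exactly the required statement.

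The only subtle step is the sign bookkeeping in the two places where signs occur (the Laplace expansion and the exterior derivative), and this is straightforward rather than a genuine obstacle. If one prefers a purely algebraic presentation avoiding differential forms, the same identity can be obtained by writing $\partial_i \det(A^{(i)})$ as a sum of $2r$ determinants obtained by differentiating one row at a time, and then observing that the terms involving $\partial_i\partial_j f_k$ cancel in pairs under the simultaneous swap $i \leftrightarrow j$ together with the symmetry $\partial_i \partial_j f_k = \partial_j \partial_i f_k$; this is the same calculation already carried out in the proof of \Cref{prop: ruppert_first_implication_new} for $r = 1$, and it generalizes verbatim.
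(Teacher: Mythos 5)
Your proof is correct. The identity to be checked is $\sum_{i=0}^{2r}\partial_i\Delta_i=0$, and your sign bookkeeping is right: the Laplace cofactor sign along the last row of the $(2r+1)\times(2r+1)$ matrix $M$ is $(-1)^{(2r+1)+(i+1)}=(-1)^i$, and $\diff x_i\wedge \diff x_0\wedge\cdots\wedge\widehat{\diff x_i}\wedge\cdots\wedge \diff x_{2r}=(-1)^i\,\diff x_0\wedge\cdots\wedge \diff x_{2r}$, so the two signs match and the closedness of $\diff f_1\wedge\cdots\wedge \diff f_{2r}$ gives exactly the required relation. The paper instead argues purely algebraically: it expands $\Delta$ as a sum over $\frakS_{2r+1}$, applies the contraction, and cancels terms in pairs under $\sigma\mapsto\sigma\circ(0,i)$ using the symmetry $\partial_{\sigma(0)}\partial_{\sigma(i)}g_i=\partial_{\sigma(i)}\partial_{\sigma(0)}g_i$ — this is precisely the second, ``purely algebraic'' route you sketch at the end, and it does generalize the $r=1$ computation of \Cref{prop: ruppert_first_implication_new} verbatim. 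Your primary argument packages that pairwise cancellation as the single formal identity $\diff^2=0$ applied to the exact forms $\diff f_k$ (valid for algebraic Kähler differentials on affine space, so no analytic input is needed); what it buys is brevity and a conceptual link to the paper's own discussion of Ruppert's construction in terms of closed meromorphic differential forms, at the cost of importing the exterior-calculus formalism into an otherwise coordinate-level computation.
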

\begin{proof}
 We show that the contraction of the $V^*$ factor on $S^{r(d-2)}V$ maps $\Delta$ to $0$. For $i = 1 \vvirg r$, denote $g_{r+i} :=h_i$. Then, up to a sign,
 \[
  \Delta = \sum_{\sigma \in \frakS_{2r+1}} (-1)^\sigma  \biggl( \prod_{i=1}^{2r} \partial_{\sigma(i)} g_i \biggr) \otimes \partial_{\sigma(0)}.
 \]
 Therefore, the result of the contraction map applied to $\Delta$ is
 \begin{align*}
  \sum_{\sigma \in \frakS_{2r+1}} (-1)^\sigma  \partial_{\sigma(0)} \biggl(\prod_{i=1}^{2r} \partial_{\sigma(i)} g_i \biggr) &=
  \sum_{\sigma\in \frakS_{2r+1}} (-1)^\sigma \sum_{i=1}^{2r} \bigl(\partial_{\sigma(0)} \partial_{\sigma(i)} g_i\bigr) \cdot  \prod_{j \neq i} \partial_{\sigma(j)} g_j \\
  &=\sum_{i=1}^{2r} \sum_{\sigma \in \frakS_{2r+1}} (-1)^{\sigma} \bigl(\partial_{\sigma(0)} \partial_{\sigma(i)} g_i\bigr) \cdot  \prod_{j \neq i} \partial_{\sigma(j)} g_j.
 \end{align*}
For every $i$ and every $\sigma\in\frakS_{2r+1}$, the term in the inner summation corresponding to $\sigma$ is opposite to the one corresponding to $\sigma \circ (0,i)$ where $(0,i)$ is the permutation swapping $0$ and $i$. Therefore the summation is zero.
\end{proof}

\begin{lemma}\label{lemma: Delta kills f}
 For every $g_1 \vvirg g_r$ and every $h_1 \vvirg h_r$ with $\deg(g_i) + \deg(h_i) = d$, let $f = g_1 h_1 + \cdots + g_r h_r$. Then $\Delta (f) =0$. 
\end{lemma}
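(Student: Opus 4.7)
The plan is to mimic the $r=1$ case carried out in the proof of \Cref{prop: ruppert_first_implication_new}, using multilinearity of the determinant in its last row together with Leibniz's rule. First I would unravel what $\Delta(f)$ actually means: it is the result of replacing the last row of $M$ (consisting of the differential operators $\partial_0, \dots, \partial_{2r}$) by its action on $f$, that is,
\[
\Delta(f) = \det \begin{pNiceMatrix}[columns-width = auto,cell-space-limits=3pt,xdots/shorten=0.5em]
\partial_0 g_1 & \Cdots & \partial_{2r} g_1 \\
\Vdots & \Ddots & \Vdots \\
\partial_0 g_r & \Cdots & \partial_{2r} g_r \\
\partial_0 h_1 & \Cdots & \partial_{2r} h_1 \\
\Vdots & \Ddots & \Vdots \\
\partial_0 h_r & \Cdots & \partial_{2r} h_r \\
\partial_0 f & \Cdots & \partial_{2r} f
\end{pNiceMatrix}.
\]

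Next, I would apply Leibniz's rule to the last row: $\partial_k f = \sum_{i=1}^{r} \bigl( h_i\, \partial_k g_i + g_i\, \partial_k h_i \bigr)$ for each $k=0,\dots,2r$. By multilinearity of the determinant in the last row, $\Delta(f)$ decomposes as a sum of $2r$ determinants,
\[
\Delta(f) = \sum_{i=1}^{r} h_i \cdot D_i^{g} + \sum_{i=1}^{r} g_i \cdot D_i^{h},
\]
where $D_i^{g}$ is the determinant obtained from $M$ by replacing its last row with $(\partial_0 g_i, \dots, \partial_{2r} g_i)$, and similarly $D_i^{h}$ uses the row of partials of $h_i$.

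The key observation is that in each of these $2r$ matrices the last row coincides verbatim with the $i$-th row (for $D_i^g$) or the $(r+i)$-th row (for $D_i^h$), so each determinant vanishes. Hence $\Delta(f) = 0$. This is the complete argument; there is no real obstacle, as the proof is a direct generalization of the determinantal identity used in \Cref{prop: ruppert_first_implication_new}, where for $r=1$ the only nontrivial input was precisely this cancellation between the Leibniz expansion and the repeated-row vanishing of the determinant.
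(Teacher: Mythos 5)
Your proof is correct and is essentially identical to the paper's own argument: both expand $\partial_k f$ by Leibniz's rule, use multilinearity of the determinant in the last row, and conclude that each of the resulting $2r$ determinants vanishes because of a repeated row.
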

\begin{proof}
Let $M(f)$ be the result of contracting the differentials in the last row of $M$ against $f$. Then $\Delta(f) = \det\bigl( M(f)\bigr)$ and the $j$-th element of the last row of $M(f)$ is 
\[
 \partial_j(f) = \sum_{i=1}^r h_i \partial_j(g_i)  + \sum_{i=1}^r g_i \partial_j(h_i) .
\]
By linearity of the determinant in the last row, we have
\[
\det M(f) = \sum_{i=1}^r h_i \det M(g_i) + \sum_{i=1}^r g_i \det M(h_i) = 0
\]
because all the matrices $M(g_i)$ and $M(h_i)$ have two equal rows.
\end{proof}
\Cref{lemma: Delta is in prolong sl} and \Cref{lemma: Delta kills f} show that $\Delta$ is an element of the kernel of the Ruppert map
\[
 \rho^{(r(d-2))}_f : \fraksl^{(r(d-2)-1)}(V) \to S^{r(d-2)+(d-1)} V.
\]
Moreover, it is clear that if $f$ is generic in any of the components of $\sigma_r(\calR^\bullet_{n,d})$, the corresponding $\Delta$ is nonzero. This can give, in principle, determinantal equations for the secant variety $\sigma_{r}(\calR^\bullet_{n,d})$, similarly to \Cref{prop: ruppert_first_implication_new}: for an element in $f \in \sigma_{r}(\calR^\bullet_{n,d})$, the Ruppert map $\rho^{(r(d-2))}_f$ is expected to have rank strictly smaller than the one attained by a generic $f \in S^d V$. There are however two difficulties in completing a proof. As explained in \Cref{rmk: syz}, the kernel of $\rho^{(r(d-2))}_f$ is the space of syzygies of $\langle \partial_0  f \vvirg \partial_n f\rangle$ of degree $r(d-2)$ which are contained in $\fraksl^{(r(d-2)-1)}(V)$. When $f$ is generic, these syzygies are generated by the Koszul relations; we expect that if $f \in \sigma_r(\calR^\bullet_{n,d})$ then $\Delta$ defines a syzygy which is linearly independent from the subspace generated by the Koszul syzygies of $f$ but in principle it is possible that one of the Koszul syzygies degenerates to $\Delta$, leaving the rank unchanged. Moreover, the rank of $\rho^{(r(d-2))}_f$ for generic $f$ is not straightforward to determine: this amounts to computing the dimension of the intersection  between the space generated by the Koszul syzygies and the prolongation $\fraksl^{(r(d-2)-1)}(V)$. The construction can be, however, carried out explicitly when $(r,d) = (2,3)$, allowing us to prove that indeed the additional syzygy induced by $\Delta$ is independent from the Koszul syzygies. This case is addressed in the next section.

\subsection{Cubics of slice rank two}\label{sec: cubics slice rank 2}

We study the variety of cubics of slice rank $2$ in $\bbP S^3 \bbC^5$; indeed $n+1 = 2r+1=5$ is the smallest number of variables for which the construction described above yields equations. In this case, $r(d-2) = 2 \cdot (3-2) = 2$ and the syzygy $\Delta$ has the same degree as the generators of the Koszul module. This allows us to easily prove that it is independent from the Koszul syzygies. More precisely, we have the following result.
\begin{theorem}\label{thm: 11 syzygies for slrk 2}
  Let $\dim V = 5$ and let $f \in S^3 V$. Consider the Ruppert map
 \[
 \begin{tikzcd}[row sep=0pt,column sep=1pc]
  \rho^{(2)}_f \colon  \fraksl^{(1)}(V) \arrow{r} & S^{4} V.
 \end{tikzcd}
 \]
 If $f$ is generic, then $\rank( \rho^{(2)}_f) = 60$, whereas if $\slrk(f) \leq 2$ then $\rank(  \rho^{(2)}_f) \leq 59$. In particular, the minors of size $60$ of $ \rho^{(2)}_f$ define equations of degree $60$ for $\sigma_2(\calR_{4,3})$.
 \end{theorem}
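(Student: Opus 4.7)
Both source and target of $\rho^{(2)}_f$ have dimension $70$: the former by \eqref{eqn: dim slpV} applied to $n=4$, $e=2$, and the latter as $\binom{8}{4}$. Hence the theorem reduces to pinning down the generic dimension of $\ker(\rho^{(2)}_f)$ and showing that this dimension strictly increases when $\slrk(f) \leq 2$.

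My plan for the first claim is to exhibit a canonical $10$-dimensional subspace of $\ker(\rho^{(2)}_f)$ valid for every $f$, the \emph{Koszul subspace} generated by the degree-two syzygies of the Jacobian ideal. For each $0 \leq i < j \leq 4$, the element
\[
K_{ij} \coloneqq (\partial_j f) \otimes \partial_i - (\partial_i f) \otimes \partial_j \in S^2 V \otimes V^*
\]
lies in $\fraksl^{(1)}(V)$ by equality of mixed partials, and is killed by $\rho^{(2)}_f$ by commutativity of multiplication. For generic $f$, the $\partial_k f$ are linearly independent in $S^2 V$, which at once yields the linear independence of the $\binom{5}{2}=10$ syzygies $K_{ij}$. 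A single evaluation in Macaulay2 on a concrete cubic such as $f = x_0^3 + \cdots + x_4^3$ confirms that the rank achieves $60$; by lower semicontinuity of the rank, this extends to the generic $f$.

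For the second claim, \Cref{prop: slice equivalent to fano} gives $\{f : \slrk(f) \leq 2\} = \sigma_2(\calR^1_{4,3})$, and since $\rank(\rho^{(2)}_f)$ is lower semicontinuous in $f$, it suffices to verify $\rank(\rho^{(2)}_f) \leq 59$ on a dense subset, namely for $f = \ell_1 h_1 + \ell_2 h_2$ with generic $\ell_i \in V$ and $h_i \in S^2 V$. Specializing the construction of \Cref{sec: more equations} with $g_i = \ell_i$ yields a nonzero $\Delta \in \ker(\rho^{(2)}_f) \cap \fraksl^{(1)}(V)$ by \Cref{lemma: Delta is in prolong sl} and \Cref{lemma: Delta kills f}. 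The proof then reduces to showing that $\Delta$ is linearly independent of the $K_{ij}$, which would give $\dim \ker(\rho^{(2)}_f) \geq 11$ and hence $\rank(\rho^{(2)}_f) \leq 59$; lower semicontinuity extends the bound to all $f$ with $\slrk(f) \leq 2$, and the $60$-minors of $\rho^{(2)}_f$ then cut out set-theoretic equations of degree $60$ for $\sigma_2(\calR^1_{4,3})$.

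The main obstacle is precisely this independence check, since a priori $\Delta$ could be absorbed into the Koszul span, leaving the rank at $60$ and giving no new information. I would attack it by a single symbolic computation: pick generic numerical values for $\ell_1, \ell_2, h_1, h_2$, flatten $K_{01}, \ldots, K_{34}, \Delta$ into $11$ vectors of $\bbC^{70}$ with respect to a common basis of $\fraksl^{(1)}(V)$, and verify in Macaulay2 that the resulting $11 \times 70$ matrix has rank $11$. A more structural alternative would be to compare the $\SL(V)$-isotypic content in $\fraksl^{(1)}(V) \simeq \bbS_{(3,1,1,1)} V$ of the Koszul span against that spanned by $\Delta$ under the stabilizer of $f$, but the direct computation is both shorter and in keeping with the computational style used elsewhere in the paper.
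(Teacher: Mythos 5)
Your proposal is correct and follows essentially the same route as the paper: the generic rank $60$ comes from the ten Koszul syzygies plus a single explicit rank computation, and the bound $59$ for slice rank two comes from adjoining the determinantal syzygy $\Delta$ of \Cref{lemma: Delta is in prolong sl} and \Cref{lemma: Delta kills f} and verifying by computer that the resulting $11$ elements of $\fraksl^{(1)}(V)$ are linearly independent, with semicontinuity of rank handling the passage from a generic point of $\sigma_2(\calR^1_{4,3})$ to the whole variety. The paper performs the independence check at a specific degenerate cubic and perturbs, whereas you propose a random numerical instance, but this is an inessential difference.
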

\begin{proof}
The proof relies on the computation of the quadratic syzygies of the Jacobian ideal of $f$. 

 Let $f \in S^3 V$ be generic. Then $Z(f)$ is a smooth hypersurface and $\partial_0 f \vvirg \partial_4f$ form a regular sequence. The only quadratic syzygies are the Koszul syzygies, which, regarded as elements of $S^2 V \otimes V^*$ have the form 
    \[
    k_{ij}(f) = (\partial_i f) \otimes \partial_j - (\partial_j f) \otimes \partial_i
    \]
    with $i < j$. Since $\partial_{ij}f = \partial_{ji}f$, we have $k_{ij}(f) \in \fraksl^{(1)}(V)$ and they span $\ker \rho^{(2)}_f$. By \eqref{eqn: dim slpV}, $\dim \fraksl^{(1)}(V) = 70$,  therefore $\dim \langle k_{ij}(f) : i,j = 0 \vvirg 4 \rangle = \binom{5}{2} = 10$ implies $\rank \rho^{(2)}_f = 60$ for generic $f$.

Define 
\[
\begin{array}{lll}
\ell_{0,\eps} = x_0 + \eps \ell_0' &\qquad &q_{0,\eps} = (x_2^2+x_3^2+x_4^2) + \eps q_2'\\ 
\ell_{1,\eps} = x_1 + \eps \ell_1' & \qquad & q_{1,\eps} = (x_2x_3 + x_1x_4)+ \eps q_1'
\end{array}
\]
and let 
\[
f_\eps = \ell_{0,\eps} q_{0,\eps} + \ell_{1,\eps} q_{1,\eps}.
\]
For a generic choice of $\ell_0', \ell_1', q_0',q_1'$ and $\eps$, the cubic $f_\eps$ is a generic element of $\sigma_2(\calR^1_{4,3})$. Let 
\[
\Delta_\eps = \det \left( \begin{array}{ccc}
    \partial_0 \ell_{0,\eps} & \cdots & \partial_4 \ell_{0,\eps} \\
    \partial_0 \ell_{1,\eps} & \cdots & \partial_4 \ell_{1,\eps} \\
    \partial_0 q_{0,\eps} & \cdots & \partial_4 q_{0,\eps} \\
    \partial_0 q_{1,\eps} & \cdots & \partial_4 q_{1,\eps} \\
    \partial_0  & \cdots & \partial_4  
\end{array}\right) \in  S^2 V \otimes V^*.
\]
By \Cref{lemma: Delta is in prolong sl} and \Cref{lemma: Delta kills f}, we have $\Delta_\eps \in \ker \rho^{(2)}_{f_\eps}$ for every $\eps$. Therefore $K_\eps = \langle k_{ij}(f_\eps) : i,j = 1 \vvirg 5 \rangle + \langle \Delta_\eps \rangle$ is a subspace of $\ker \rho^{(2)}_{f_\eps}$. 

We observe that $\dim K_\eps = 11$ for generic $\eps$. Clearly $\dim K_\eps \leq 11$ because it is spanned by $11$ elements of $S^2 V \otimes V^*$. Moreover, equality holds by semicontinuity, because for $\eps = 0$, one can directly verify with the support of a computer algebra software that $\dim K_0 = 11$.

This shows that if $\slrk(f) \leq 2$, then $\rho^{(2)}_{f}$ has a kernel of dimension at least $11$ and therefore $\rank \rho^{(2)}_{f} \leq 70-11 = 59$. This concludes the proof.
\end{proof}
It is natural to ask whether the equations described in \Cref{thm: 11 syzygies for slrk 2} define $\sigma_2(\calR^{1}_{4,3})$ set-theoretically. This is not the case. We describe a $21$-dimensional variety $\calS \subseteq \bbP S^3 V$ not contained in $\sigma_2(\calR^{1}_{4,3})$ with the property that, if $f \in \calS$, then $ \rank \rho^{(2)}_{f} \leq 53 \leq 59$.

Let $C_4 \subseteq \bbP V^*$ be a rational normal curve of degree $4$. Then $\sigma_2(C_4)$ is a cubic hypersurface. In coordinates, if $C_4 = \{ (t_0^4, t_0^3t_1 \vvirg t_1^4) \in \bbP V^* : (t_0,t_1) \in \bbP^1\}$, we have 
\[
\sigma_2(C_4) = Z(f) \qquad \text{ where } \qquad f = \det \left(\begin{array}{ccc} 
x_0 & x_1 & x_2 \\
x_1 & x_2 & x_3 \\
x_2 & x_3 & x_4 
\end{array}\right).
\]
A direct computation with the support of a computer algebra software shows that in this case $\rank \rho^{(2)}_{f} = 53$. On the other hand, $\slrk(f) \geq 3$: indeed, if $\slrk(f) \leq 2$, then by \Cref{prop: slice equivalent to fano}, $\sigma_2(C_4)$ would contain a linear space of codimension $2$. The next result shows that this is not possible.
\begin{lemma}\label{lemma: no P2 in secants}
Let $n\geq 4$ and let $C\subset \bbP^n$ be a smooth, irreducible non-degenerate curve. Then $\sigma_2(C)$ does not contain a linear space $P$ with $\dim P = 2$. 
\end{lemma}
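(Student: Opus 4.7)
Proof plan. I plan to assume for contradiction that a $2$-plane $P \subseteq \sigma_2(C)$ exists and to derive a contradiction with the non-degeneracy of $C$ via projection from $P$.

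First, I will introduce the incidence variety
\[
I = \Set{(p, L) \in P \times \mathrm{Ch}(C) : p \in L},
\]
where $\mathrm{Ch}(C) \subseteq \Gr(1, \bbP^n)$ denotes the variety of chords of $C$, that is, the closure of the family of secant lines to $C$ (which includes tangent lines as limits). As the image of the irreducible surface $C \times C$ under the chord map, $\mathrm{Ch}(C)$ is irreducible of dimension $2$, and the hypothesis $P \subseteq \sigma_2(C) = \bigcup_{L \in \mathrm{Ch}(C)} L$ makes the projection $I \to P$ surjective; in particular, $\dim I \geq \dim P = 2$.

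Next I will analyze the other projection $I \to \mathrm{Ch}(C)$, with image $\Gamma$. Let $\Gamma_1 \subseteq \Gamma$ be the locus of chords $L \subseteq P$ and $\Gamma_0 = \Gamma \setminus \Gamma_1$. Because $C$ is irreducible and non-degenerate in $\bbP^n$, while $P$ is a proper linear subspace, the intersection $C \cap P$ is finite; consequently only finitely many chords of $C$ (secants or tangent lines) can be contained in $P$, so $\Gamma_1$ is finite. The fiber of $I \to \mathrm{Ch}(C)$ over a point of $\Gamma_0$ is the single point $L \cap P$, while over $\Gamma_1$ it is the whole line $L$. Thus $\dim I = \max(\dim \Gamma_0, 1)$, and $\dim I \geq 2$ forces $\dim \Gamma_0 \geq 2 = \dim \mathrm{Ch}(C)$; irreducibility of $\mathrm{Ch}(C)$, together with the closedness of the condition $L \cap P \neq \emptyset$, then gives $\Gamma = \mathrm{Ch}(C)$, i.e., every chord of $C$ meets $P$.

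To finish, I will use the linear projection $\pi_P \colon \bbP^n \dashto \bbP^{n-3}$ from $P$; since $C$ is a smooth curve and $C \cap P$ is finite, $\pi_P|_C$ extends to a morphism $C \to \bbP^{n-3}$. For distinct $a,b \in C \setminus P$, one has $\pi_P(a) = \pi_P(b)$ if and only if the chord $\overline{ab}$ meets $P$, and by the previous step this holds for all such pairs. Hence $\pi_P|_C$ is constant on the dense open subset $C \setminus (C \cap P)$, and therefore on all of $C$. Consequently $C$ is contained in a single fiber of $\pi_P$, which is a $\bbP^3 \subseteq \bbP^n$; since $n \geq 4$, this contradicts the non-degeneracy of $C$. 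The main delicate step is the dimensional count upgrading $\dim I \geq 2$ to $\Gamma = \mathrm{Ch}(C)$; once that is in hand, the projection from $P$ produces the contradiction immediately.
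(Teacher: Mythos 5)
Your argument is correct and follows essentially the same route as the paper's: a dimension count on an incidence correspondence forces every chord of $C$ to meet $P$, and then $C$ is trapped in a $\bbP^3$ (the span of $P$ and a point, i.e.\ a fiber of the projection from $P$), contradicting non-degeneracy since $n \geq 4$. The only cosmetic difference is that you run the count in $P \times \mathrm{Ch}(C)$ and dispose of tangent lines by noting that the locus $\Gamma_1$ of chords contained in $P$ is finite, whereas the paper works with pairs in $C \times C$ and first rules out the case where a generic point of $P$ lies only on tangent lines.
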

\begin{proof}
Assume by contradiction that $\sigma_2(C)$ contains a linear space $P$ with $\dim P = 2$. Let $q \in P$ be a generic point and observe that $q$ lies on a secant line to $C$. Indeed, since $C$ is smooth, $\sigma_2(C)$ is union of the set of points lying on secant lines and the set of those lying on tangent lines. The latter is the tangential variety $\tau(C)$ of $C$, which is irreducible of dimension $2$. If a generic point $q \in P$ lied on $\tau(C)$, then $P = \tau(C)$ because they are both irreducible of dimension $2$. This shows $\tau(C)$ is a linear space, in contradiction with the linear non-degeneracy of $C$. Therefore $p$ lies on a secant line to $C$.

Define 
\[
E =\bar{ \{ (p_1,p_2) \in C \times C : P \cap  \langle p_1,p_2 \rangle \neq  \emptyset\}} \subseteq C \times C.
\]
Let $u : E \dashto P$ be the map defined by $u(p_1,p_2) = P \cap \langle p_1,p_2 \rangle$. By construction $u$ is dominant and since $P$ and $C \times C$ both have dimension $2$ we obtain that a generic secant line intersects $P$. Therefore, for a generic $q \in C$ and a generic secant line $\langle q, p\rangle$ with $p \in C$, we have $\langle q, p\rangle \cap P \neq \emptyset$, so $p \in \langle P , q\rangle$. This shows $C \subseteq \langle P, q\rangle$ in contradiction with the non-degeneracy of $C$.
\end{proof}

\Cref{lemma: no P2 in secants} guarantees that if $Z(f) = \sigma_2(C_4)$ then $\slrk(f) \geq 3$. In particular, for every rational normal quartic curve $C_4 \subseteq \bbP V^*$, the cubic defining $\sigma_4(C_4)$ satisfies $\rank \rho_f^{(2)} \leq 59$ but $\slrk(f) \geq 3$. Let $\calS \subseteq \bbP S^3 V$ be the (closure of) the set of such cubic polynomials. Since the rational normal quartic is unique up to the action of $\SL_5$, we have 
\[
\dim \calS = \dim \SL_5 - \dim \SL_2 = 24 - 3 = 21,
\]
where $\SL_2$ is the subgroup preserving a fixed rational normal curve.

\section{A reduction for cubics of slice rank two}
In this final section, we prove an inheritance result similar to \autoref{thm: ruppert any vars} for cubic forms of slice rank $2$.
\begin{theorem}\label{thm:restrictions_vars}
  Let $f \in S^3 V$ with $\dim V = n+1$ irreducible. The following statements are equivalent:
  \begin{enumerate}[label=(\roman*)]
      \item $\slrk(f) = 2$;
      \item for a generic $E \subseteq V^*$ with $\dim E = 14$, $\slrk(f|_E) = 2$;
      \item for every $E \subseteq V^*$ with $\dim E = 14$, $\slrk(f|_E) \leq 2$.
  \end{enumerate}
\end{theorem}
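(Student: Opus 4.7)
The plan is to prove the chain $(i)\Rightarrow (iii)\Rightarrow (ii)\Rightarrow (i)$. The first implication is immediate, as any slice-rank-two decomposition $f=\ell_1q_1+\ell_2q_2$ restricts to $f|_E=(\ell_1|_E)(q_1|_E)+(\ell_2|_E)(q_2|_E)$ for every $E\subseteq V^*$. For $(iii)\Rightarrow (ii)$, Bertini applied to $Z(f)$ gives that $f|_E$ is irreducible for generic $E$ of dimension $14$, so $\slrk(f|_E)\neq 1$, and combined with $\slrk(f|_E)\leq 2$ from $(iii)$ this yields $\slrk(f|_E)=2$ generically.

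For the main implication $(ii)\Rightarrow (i)$, I would argue by induction on $n=\dim V-1$, with tautological base case $n=13$. In the inductive step $n\geq 14$, one passes to a generic hyperplane $W\subseteq V^*$: Bertini ensures $f|_W$ remains irreducible, and a further Bertini-type argument on the incidence $\{(E,W)\colon E\subseteq W\}\subseteq \Gr(14,V^*)\times\bbP V$ shows that the open dense locus of $\Gr(14,V^*)$ on which $\slrk(f|_E)=2$ meets $\Gr(14,W)$ in an open dense subset for generic $W$, so hypothesis $(ii)$ is inherited by $f|_W$. Applying the inductive hypothesis yields $\slrk(f|_W)=2$ for generic $W$, and by \Cref{prop: slice equivalent to fano} there is an $(n-2)$-dimensional subspace $L_W\subseteq W$ with $\bbP L_W\subseteq Z(f)$.

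The remaining step, and the main obstacle, is to promote the family $\{L_W\}_W$ to a single $(n-1)$-dimensional subspace $L\subseteq V^*$ with $\bbP L\subseteq Z(f)$. I would consider the incidence
\[
\Sigma=\{(L,W)\colon L\subseteq W\subseteq V^*,\ \bbP L\subseteq Z(f),\ \dim L=n-2\}\subseteq \Gr(n-2,V^*)\times\bbP V
\]
and denote by $\Phi\subseteq \Gr(n-2,V^*)$ the image of its first projection, that is the Fano variety of $(n-3)$-dimensional projective subspaces of $Z(f)$. The projection $\Sigma\to\bbP V$ is dominant by the inductive step, while the fibers over $\Phi$ are $\bbP^2$'s of hyperplanes through a fixed $(n-2)$-dimensional subspace; a dimension count thus gives $\dim\Phi\geq n-2$, matching exactly the dimension of $\Gr(n-2,n-1)$, the variety of $\bbP^{n-3}$'s inside any single $\bbP^{n-2}$. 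To convert this lower bound into an actual $\bbP^{n-2}\subseteq Z(f)$, I would fix $L_0\in\Phi$, choose coordinates with $L_0=Z(x_0,x_1,x_2)$ so that $f=x_0f_0+x_1f_1+x_2f_2$ for some $f_0,f_1,f_2\in S^2V$, and analyze the projection $\pi_{L_0}\colon \bbP V^*\dashrightarrow \bbP^2$: the residual scheme of $Z(f)$ in the $\bbP^{n-2}$-span $\langle L_0,p\rangle$ over $p=[\alpha_0:\alpha_1:\alpha_2]$ is a quadric $Q_p$ obtained by substituting $x_i=s\alpha_i$ in $\alpha_0f_0+\alpha_1f_1+\alpha_2f_2$, and an extension of $\bbP L_0$ to a $\bbP^{n-2}\subseteq Z(f)$ corresponds to a point $p\in\bbP^2$ with $Q_p\equiv 0$. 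Translating the bound on $\dim\Phi$ into sufficient degeneracy of the linear system $p\mapsto Q_p$ to force such a $p$ to exist is the crux of the argument, and this is where the specific threshold $14$ should enter.
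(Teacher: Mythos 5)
Your argument has a genuine gap, and you have located it yourself: the ``crux'' of promoting the family $\{L_W\}_W$ of codimension-two subspaces of the hyperplane sections to a single codimension-two subspace of $V^*$ contained in $Z(f)$ is exactly the content of the theorem, and it is left unproved. The dimension count $\dim\Phi\geq n-2$ cannot by itself force the existence of a $\bbP^{n-2}\subseteq Z(f)$: the inductive step you need (``if the generic hyperplane section of an irreducible cubic has slice rank $2$, then the cubic has slice rank $\leq 2$'') is \emph{false} for small $n$ --- every cubic surface in $\bbP^3$ contains lines, so every cubic threefold has all its hyperplane sections of slice rank $\leq 2$, yet a generic cubic threefold contains no plane. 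So no soft incidence/dimension argument of the kind you sketch can close the step; some hard, dimension-dependent geometric input is required, and since the threshold $14$ is supposed to ``enter'' precisely in the part you have not written, the entire quantitative content of the statement is missing. (The reductions $(i)\Rightarrow(iii)\Rightarrow(ii)$ and the Bertini upgrade from $\leq 2$ to $=2$ are fine.)

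For comparison, the paper does not induct on hyperplane sections at all. It restricts once to a generic $14$-dimensional $E$, uses \Cref{prop: singular locus} to conclude that $Z(f)$ is singular in codimension at most $4$, and then runs a case analysis on the codimension of a maximal irreducible component $Y$ of $\Sing(f)$, exploiting that $\sigma_2(Y)\subseteq Z(f)$ (\Cref{lem: secant of sing}) together with the refined Palatini lemma (\Cref{prop: palatini upgraded}) and conciseness counts; the number $14$ arises from those conciseness bounds (e.g.\ $f\in(x_0,\dots,x_3)^2$ forces $f$ into at most $14$ variables), and the hardest subcase is killed by degenerating $f$ and evaluating the explicit degree-$60$ determinantal equations of \Cref{thm: 11 syzygies for slrk 2}. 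If you want to salvage your projection-from-$L_0$ analysis of the quadrics $Q_p$, you would need to supply an input of comparable strength --- for instance a classification of when the Fano scheme of $(n-3)$-planes of a concise irreducible cubic can have dimension $\geq n-2$ without the cubic containing an $(n-2)$-plane --- and that is not a routine Bertini argument.
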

We do not expect the bound of fourteen variables in \Cref{thm:restrictions_vars} to be optimal. We will propose the more general \Cref{conj: general restrictions}, which predicts that in the case of slice rank $2$ the optimal reduction would be to five variables. However, as we mentioned in the introduction, providing explicit upper bounds for such reduction results is in general very challenging. 

We record an immediate fact regarding singularities of secant varieties.
\begin{lemma}\label{lem: secant of sing}
    Let $f \in S^3 V$. If $Y \subseteq \Sing\bigl(Z(f)\bigr)$, then $\sigma_2(Y) \subseteq Z(f)$.
\end{lemma}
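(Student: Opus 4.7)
The strategy is elementary and reduces to a multiplicity count for binary cubic forms. Since $Z(f)$ is Zariski closed, it suffices to prove that for any two points $p,q \in Y$ the whole line $\ell = \langle p, q \rangle \subseteq \bbP V^*$ is contained in $Z(f)$; the inclusion $\sigma_2(Y) \subseteq Z(f)$ then follows by taking closures of the union of secant lines.

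To prove the inclusion $\ell \subseteq Z(f)$, fix affine lifts of $p$ and $q$, parametrize $\ell$ by $[s:t] \mapsto sp + tq$, and consider the binary cubic form
\[
g(s,t) \coloneqq f(sp+tq) = a_0 s^3 + a_1 s^2 t + a_2 st^2 + a_3 t^3 \in \bbC[s,t].
\]
The hypothesis $Y \subseteq \Sing\bigl(Z(f)\bigr)$ means that $f$ and all its first partial derivatives vanish at $p$ and at $q$. Hence $g(1,0) = f(p) = 0$ and $g(0,1) = f(q) = 0$, giving $a_0 = a_3 = 0$. Moreover, the chain rule yields
\[
\partial_t g(1,0) = \sum_{i=0}^n q_i \, \partial_i f(p) = 0, \qquad \partial_s g(0,1) = \sum_{i=0}^n p_i \, \partial_i f(q) = 0,
\]
which forces $a_1 = a_2 = 0$. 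Therefore $g \equiv 0$, so $f$ vanishes identically on $\ell$.

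This shows $\bigcup_{p,q \in Y} \langle p, q\rangle \subseteq Z(f)$. Taking closures and using that $Z(f)$ is closed gives $\sigma_2(Y) \subseteq Z(f)$. There is no real obstacle here: since $\deg f = 3$, a binary restriction has only four coefficients, and each singular point contributes two vanishing conditions, so the two points exhaust all four degrees of freedom.
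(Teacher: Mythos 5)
Your proof is correct and follows essentially the same route as the paper's: the paper notes that a secant line meets $Z(f)$ with intersection multiplicity at least $2$ at each of the two singular points and concludes by B\'{e}zout's Theorem since $4 > \deg(f) = 3$, while you carry out the identical multiplicity count explicitly on the coefficients of the restricted binary cubic. The explicit computation simply replaces the appeal to B\'{e}zout; nothing is gained or lost.
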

\begin{proof}
Let $L$ be a secant line to $Y$ so that $L \cap Y$ consists of (at least) two points. Since $Y$ is in the singular locus of $Z(f)$, the intersection multiplicity between $L$ and $Z(f)$ at each point of $Y\cap L$ is at least $2$. Since $\deg(f) = 3$, we obtain $L \subseteq Z(f)$ by B\'{e}zout's Theorem. Taking closures, we have
\[
\sigma_2(Y)\subseteq Z(f)
\] 
as desired.
\end{proof}
We also recall the classical Palatini's Lemma and a stronger version useful in the proof of \Cref{thm:restrictions_vars}. We refer to \cite[Proposition 1.2.2(3)]{Russo:GeometrySpecialProjVars} for the classical statement and to \cite[Corollary~3.4.2]{Russo:GeometrySpecialProjVars} for its stronger version. 
\begin{proposition}\label{prop: palatini upgraded}
Let $X \subseteq \bbP^N$ be a linearly non-degenerate irreducible variety. Then either $X$ is a hypersurface and $\sigma_2(X) = \bbP^N$ or 
\begin{equation}\label{eq: palatini statement}
\dim \bigl(\sigma_2(X)\bigr) \geq \dim X + 2.
\end{equation}
Moreover, if equality holds in \eqref{eq: palatini statement}, then one of the following holds:
\begin{enumerate}[label=(\roman*)]
 \item $\dim X = N-2$ and $\sigma_2(X) = \bbP^N$;
 \item $X$ is a curve or a cone over a curve;
 \item $X$ is the Veronese surface $\nu_2(\bbP^2)$ or a cone over it.
\end{enumerate}
\end{proposition}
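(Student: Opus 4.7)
The plan is to reduce the dimension statement to a question about tangent space intersections via Terracini's Lemma, then split into cases based on the value of $\dim \sigma_2(X) - \dim X$, and for the equality case invoke the classical classification of $2$-defective varieties. By Terracini's Lemma, for a generic pair of smooth points $p, q \in X$ and a generic point $z$ on the secant line $\langle p, q\rangle$, one has $T_z \sigma_2(X) = \langle T_p X, T_q X\rangle$; equivalently,
\[
\dim \sigma_2(X) = 2\dim X + 1 - \dim(T_p X \cap T_q X).
\]

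For the inequality, I would argue by elimination. Since $X \subseteq \sigma_2(X)$ and both are irreducible, $\dim \sigma_2(X) \geq \dim X$; equality forces $\sigma_2(X) = X$, hence $X$ is a linear space, which contradicts non-degeneracy unless $X = \bbP^N$. The remaining borderline case is $\dim \sigma_2(X) = \dim X + 1$. The key observation is that the general fiber of the abstract secant map $S^\circ \to \sigma_2(X)$, where $S^\circ$ is the incidence of pairs $(p,q) \in X \times X$ together with a point of $\langle p, q\rangle$, has dimension $2\dim X + 1 - \dim \sigma_2(X) = \dim X$. Hence for a generic $z \in \sigma_2(X)$, the entry locus $\Sigma_z \subseteq X$ has dimension $\dim X$, so by irreducibility $\Sigma_z = X$: every point of $X$ lies on a secant through $z$. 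By induction on $N$, projecting from such a $z$ yields a non-degenerate variety $X' \subseteq \bbP^{N-1}$ of the same dimension whose second secant variety has dimension at most $\dim \sigma_2(X) - 1$; iterating this yields that $\sigma_2(X)$ is closed under taking secant lines, hence is a linear subspace, hence $\sigma_2(X) = \bbP^N$ by non-degeneracy. In other words, $X$ is a hypersurface.

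For the classification of equality $\dim \sigma_2(X) = \dim X + 2$, the case $\sigma_2(X) = \bbP^N$ gives case (i) directly. If instead $\sigma_2(X) \subsetneq \bbP^N$, then $X$ is a proper $2$-defective variety with defect $2\dim X - 1$, and one invokes the classical Severi--Scorza classification of such varieties. For curves ($\dim X = 1$) the bound is automatic, giving case (ii). For surfaces, Severi's theorem identifies the Veronese surface $\nu_2(\bbP^2) \subset \bbP^5$ as the unique smooth defective example, yielding case (iii). For $\dim X \geq 3$, Scorza's analysis via tangential projections shows that the only remaining possibilities force $X$ to be a cone, either over a curve or over the Veronese surface, which fall into cases (ii) and (iii) respectively. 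The main obstacle is the implication $\dim \sigma_2(X) = \dim X + 1 \Rightarrow X$ hypersurface: the entry locus argument sets up the right geometry, but actually deducing that $\sigma_2(X)$ is a linear subspace requires careful control of the interaction between tangential projections and the secant variety, as carried out in detail in Russo's book via the study of second fundamental forms and repeated tangential projections.
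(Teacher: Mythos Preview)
The paper does not supply its own proof of this proposition: it is stated as a classical fact and the reader is referred to Russo's \emph{Geometry of Special Projective Varieties}, Proposition~1.2.2(3) for the first part and Corollary~3.4.2 for the classification in the equality case. Your sketch is essentially an outline of the argument one finds there, and you acknowledge this explicitly at the end. In that sense there is nothing to compare: your proposal \emph{is} the cited proof, recounted.

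That said, a few steps in your outline are looser than they should be. In the case $\dim \sigma_2(X) = \dim X + 1$, the passage ``projecting from such a $z$ \ldots\ iterating this yields that $\sigma_2(X)$ is closed under taking secant lines'' does not quite work as written: the projection from $z$ may well drop the dimension of $X$ (indeed, if every point of $X$ lies on a secant through $z$, the projection is generically two-to-one onto its image), and the induction you invoke is not set up. The cleaner way to finish is the one you almost reach: once $\Sigma_z = X$ for a generic $z \in \sigma_2(X)$, the cone over $X$ with vertex $z$ is contained in $\sigma_2(X)$ and has dimension $\dim X + 1 = \dim \sigma_2(X)$, so by irreducibility $\sigma_2(X)$ equals this cone. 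Thus $\sigma_2(X)$ is a cone with vertex at its generic point, and such a variety is linear (the vertex locus is a linear space containing a dense open subset). Non-degeneracy then forces $\sigma_2(X) = \bbP^N$. For the equality case, your appeal to the Severi--Scorza classification is exactly what the cited reference does; just be aware that making this self-contained is a substantial undertaking, not a remark.
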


We will use these two results, together with \Cref{thm: 11 syzygies for slrk 2}, to complete the proof of \Cref{thm:restrictions_vars}.

\begin{proof}[Proof of \Cref{thm:restrictions_vars}]
The implications $(i)\Rightarrow(iii)$ and $(iii)\Rightarrow(ii)$ are clear.

We prove the implication $(ii)\Rightarrow(i)$. In other words, we prove that under the assumption of $(ii)$, $Z(f)$ contains a linear space of codimension at most $2$. Without loss of generality, assume $\dim V \geq 15$ otherwise the statement is immediately verified. Moreover, assume $f$ is concise in $S^3 V$, in the sense that there is no change of coordinates such that $f$ can be written in fewer than $(\dim V)$-many variables.

  Let $E \subseteq V^*$ be a generic linear space of dimension $14$; then $\slrk(f|_E) \leq 2$. By \Cref{prop: singular locus}, we have that $Z(f|_E) \subseteq \bbP E$ is singular in codimension (at most) $4$. On the other hand, by Bertini's Theorem and the genericity of $E$, we have \[\Sing( f|_E) = \Sing(f) \cap \bbP E.\] Therefore $Z(f)$ is singular in codimension at most $4$ as well.
 
  The rest of the proof is a case-by-case analysis on the dimension and the geometry of $\Sing(f)$. Let $Y$ be an irreducible component of $\Sing(f)$ of maximal dimension. By \Cref{lem: secant of sing}, we have $\sigma_2(Y) \subseteq Z(f)$. We recall that if $f$ is singular along $Y$, then $f \in I(Y)^{(2)}$, where $-^{(2)}$ denotes the second symbolic power, see \cite[Theorem~3.15]{Eis:CA}. In all cases where this fact will be used, $I(Y)$ is generated by a regular sequence and, therefore, $I(Y)^{(2)} = I(Y)^2$, e.g., \cite[(2.1)]{hochster1973criteria}. We have the following cases:
\begin{enumerate}[label=(\arabic*), leftmargin=20pt]
  \item \underline{$\codim(Y) = 1$}. In this case, $f$ is not reduced, therefore $\slrk(f) = 1$.
\item  \underline{$\codim(Y) = 2$}. If $Y$ is a linear space, then $\slrk(f) \leq 2$ because $Y  \subseteq Z(f)$. 
  If $Y$ is not a linear space, since $\sigma_2(Y) \subseteq Z(f)$, we have
  \[
  \dim \sigma_2(Y) = \dim Y + 1.
  \] 
  By \Cref{prop: palatini upgraded}, we deduce that $\sigma_2(Y) = \langle Y \rangle$ is a linear space of codimension $1$. In this case $f$ is reducible and $\slrk(f) = 1$. 
\item \underline{$\codim (Y) = 3$}. We consider three subcases.
\begin{enumerate}[leftmargin=\parindent]
  \item If $\codim \sigma_2(Y) = 3$, then we have $\sigma_2(Y)=Y$, that is, $Y$ is a linear space. Assume 
  \[
  Y = Z(x_0,x_1,x_2).
  \]
  Since $f$ is singular along $Y$, we deduce \[f \in (x_0,x_1,x_2)^2.\] Therefore $f$ is a combination of the six quadratic monomials in $x_0,x_1,x_2$. We conclude that $f$ can be written in at most $9$ variables after a change of coordinates, in contradiction with the conciseness assumption.
\item If $\codim \sigma_2(Y) = 2$, then $\sigma_2(Y) \subseteq Z(f)$ is a linear space by \Cref{prop: palatini upgraded}. In this case, we conclude immediately $\slrk(f) \leq 2 $.
\item  If $\codim \sigma_2(Y) = 1$, then since $\sigma_2(Y)\subseteq Z(f)$, $\sigma_2(Y)$ is a component of $Z(f)$.
If $\sigma_2(Y)$ is a linear space, then $\slrk(f) = 1$. If $\sigma_2(Y)$ is a quadratic hypersurface, then $f$ would be reducible as well, and hence $\slrk(f) = 1$; in fact, this case yields a contradiction because $\sigma_2(Y)$ cannot be a quadratic hypersurface; see, e.g., \cite[Theorem 1.2]{SidSul:ProlontationCompAlgebra}. Therefore, the only other possibility is that $\sigma_2(Y) = Z(f)$ and we have 
\[
\dim \sigma_2(Y) = \dim Y  + 2.
\]
By \Cref{prop: palatini upgraded}, we deduce that $Y$ is either a cone over a curve in $\bbP^4$ or a cone over $\nu_2(\bbP^2) \subseteq \bbP^5$. In this case, also $Z(f)$ is a cone over a hypersurface in $\bbP^4$ or $\bbP^5$ because secant varieties of cones are themselves cones. In both cases, this is in contradiction with the conciseness assumption.
\end{enumerate}
\item \underline{$\codim (Y) = 4$}. In this case, we first show that $Y$ can be chosen to be contained in a linear space of codimension $2$. This is a condition similar to the one of \Cref{prop: singular locus compt}. Let $Y_1 \vvirg Y_p$ be the irreducible components of $Z(f)$ having codimension exactly $4$. By Bertini's Theorem, for a generic linear space $H \subseteq V^*$ with $\dim H = 6$, we have that $Y_j ' = Y_j \cap H$ are the unique irreducible components of dimension $1$ of the singular locus of $Z(f|_H) = Z(f) \cap \bbP H$. By assumption $\slrk(f|_H) = 2$ so 
\[
f|_H = \ell_0 h_0 + \ell_1 h_1
\] 
and we have 
\[
H \cap Z(\ell_0,\ell_1,h_0,h_1) \subseteq \Sing(f|_H);
\]
since 
\[
\dim  \bigl(H \cap Z(\ell_0,\ell_1,h_0,h_1)\bigr) \geq 1,
\]
equality holds and we deduce that the irreducible components of $ H \cap Z(\ell_0,\ell_1,h_0,h_1)$ are some of the components $Y_1' \vvirg Y_p'$. Let $Y = Y_j$ for an index $j$ such that $Y_j'$ is one of this components. We obtain that $Y$ has the property that, for a generic choice of $H$, $Y \cap H$ is contained in a linear space of codimension $2$. By genericity, we deduce that $Y$ is contained in a linear space of codimension $2$ as well: this is a consequence, for instance, of \cite[Proposition A3]{GesKayTel:ChoppedIdeals}, which guarantees that if $Y$ is linearly non-degenerate then generic $0$-dimensional linear sections of $Y$ are linearly non-degenerate as well. Therefore, there are only three possibilities for $\codim \sigma_2(Y)$:
\begin{enumerate}[leftmargin=\parindent]
\item If $\codim \sigma_2(Y) = 4$, then $Y$ is a linear space. Assume $Y = Z(x_0,x_1,x_2,x_3)$. Since $f$ is singular along $Y$, we deduce $f \in (x_0,x_1,x_2,x_3)^2$. Therefore $f$ is a combination of the ten quadratic monomials in $x_0,x_1,x_2,x_3$. We conclude that $f$ can be written in at most $10+4$ variables after a change of coordinates, in contradiction with the conciseness assumption.

\item If $\codim \sigma_2(Y) = 3$, then $\sigma_2(Y) = \langle Y \rangle$ by \Cref{prop: palatini upgraded} and $Y$ is contained in a linear space of codimension $3$. Therefore $I(Y) = (\ell_0,\ell_1,\ell_2,g)$ for some linear forms $\ell_0 \vvirg \ell_2$ and an irreducible form $g$ of degree at least $2$. Since $f$ is singular along $Y$, we have $f \in I(Y)^2$. If $\deg (g) \geq 3$, we deduce $f \in (\ell_0,\ell_1,\ell_2)^2$ and we conclude that $f$ can be written in at most $10$ variables, in contradiction with the conciseness assumption. If $\deg(g) = 2$, let $\ell_0=x_0, \ell_1=x_1,\ell_2=x_5$ and choosing three additional linear forms $x_2,x_3,x_4$, we may assume
\[
f = x_0^2 x_2+x_0x_1 x_3 + x_1^2 x_4 + x_5 g .
\]
where $g$ is a quadric in the variables $x_0,\ldots, x_n$.

We will prove that $g|_{x_0 = \cdots = x_5 = 0}$ is a quadric of rank at least $2$. This will follow from a dimension count based on the fact that $f$ is concise in at least $15$ variables. Write 
\[
V^* = W_1 \oplus \langle \partial_5 \rangle \oplus W_2 
\]
with $W_1 = \langle \partial_0 \vvirg \partial_4\rangle$ and $W_2 = \langle \partial_6 \vvirg \partial_n\rangle$. The conciseness assumption guarantees that $\dim (S^2 V^* \contract f ) \geq 15$; here $\contract$ denotes the differentiation action. We are going to show 
\[
\dim [(W_2 \contract g ) / \langle x_0 \vvirg x_5 \rangle]  \geq 2,
\]
where for any spaces $U_1,U_2 \subseteq U$, we write $U_1/U_2$ to denote $(U_1+U_2)/U_2$. Write 
\[
S^2 V^* = V^*\cdot \partial_5 + S^2 W,
\]
where $W = W_1 \oplus W_2$. Now, the variables $x_6,\dots,x_{n}$ only appears in $g$ and with degree at most $2$, so 
\[
 S^2 W \contract f \subseteq \langle x_0 \vvirg x_5 \rangle,
 \]
 which implies $\dim (S^2 W \contract f ) \leq 6$. Therefore, we have 
\begin{equation}\label{eqn: at least 9 dimensions}
\dim \bigl( (V^* \contract \partial_5 f) / \langle x_0 \vvirg x_5 \rangle \bigr) \geq 9.
\end{equation}
Since
\[
\partial_5 f = x_5 \partial_5g + g \quad \text{and} \quad  W \contract (x_5 \partial_5g) \subseteq \langle x_5 \rangle,
\]
the latter does not contribute modulo $\langle x_0 \vvirg x_5\rangle$.
We deduce that the contribution in \eqref{eqn: at least 9 dimensions} is given by (a subspace of) 
\[
\langle \partial_5^2 ((x_5g) \rangle + (W \contract g).
\]
Since 
\[
\dim \langle \partial_5^2 (x_5g) \rangle \leq 1 \quad \text{ and } \quad \dim (W_1 \contract g) \leq 5
\]
we conclude that 
\[
\dim (W_2 \contract g)/\langle x_0 \vvirg x_5 \rangle \geq 3
\] 
which is at least $2$ as desired. This guarantees that $g|_{x_0 = \cdots = x_5 = 0}$ is a quadric of rank at least $2$. We may assume, without loss of generality, that 
\[
g|_{x_i = 0, i\neq 6,7} = x_6 x_7.
\] 
Let $H = \langle x_8 \vvirg x_n\rangle^\perp \subseteq V^*$. Then 
\[
f|_H = x_0^2 x_2+x_0x_1 x_3 + x_1^2 x_4 + x_5 g|_H
\]
satisfies $\slrk(f|_H) \leq 2$ because $\dim H = 8 \leq 14$. Indeed, by semicontinuity of slice rank, the assumption that restrictions of $f$ to generic linear subspaces of dimension at most $14$ have slice rank at most $2$ implies that the same holds for every subspace of dimension at most $14$. By construction, the monomial $x_6x_7$ appears in $g|_H$ whereas $x_6^2,x_7^2$ do not. We prove $\slrk(f|_H) \geq 3$, which yields a contradiction. Consider the group element $h_\eps \in \GL(V)$ defined by 
\begin{align*}
h_\eps (x_5) &=  \eps^2 x_5 \\
 h_\eps(x_6) &=  \eps^{-1} x_6 \\
  h_\eps(x_7) &=  \eps^{-1} x_7 \\
  h_\eps (x_i) &= x_i \text{ for } i \neq 5,6,7,
\end{align*}
and set
\[
f_0 = \lim_{\eps \to 0} h_\eps \cdot f|_H = x_0^2 x_2 + x_0 x_1 x_3 + x_1^2 x_4 + x_5 x_6 x_7 .
\]
Since $\slrk(f|_H) \leq 2$, we have $\slrk(f_0) \leq 2$ as well. However, $f_0$ does not satisfy the degree $60$ equations of \Cref{thm: 11 syzygies for slrk 2} for slice rank $2$. Explicitly, the restriction $f_0'$ of $f_0$ defined by 
\[
x_5 \mapsto x_0 + x_2, \qquad x_6 \mapsto x_3, \qquad x_7 \mapsto x_1 + x_4
\]
satisfies $\rank \rho^{(2)}_{f_0'} = 60$. This yields a contradiction.

\item If $\codim \sigma_2(Y) = 2$, then $\sigma_2(Y) = \langle Y \rangle$ is a linear space, and $\slrk(f) = 2$. 
\end{enumerate}
\end{enumerate}
This concludes the proof that condition (ii) implies condition (i).
\end{proof}

In the setting discussed in \Cref{subsec: functors}, \Cref{thm:restrictions_vars} provides the upper bound $n_0+1\leq 14$ for the dimension of a space for which set-theoretic equations for $\sigma_2(\calR^1_{n,3})$ can be obtained via pullback from $\sigma_2(\calR^1_{n_0,3})$. We do not expect the bound $n_0+1 \leq 14$ to be sharp. In fact, a more involved argument in case (4.a) of the proof of \Cref{thm:restrictions_vars} shows the bound $n_0 +1 \leq 13$, which does not affect any other part of the proof; however, we do not expect $n_0 + 1 \leq 13$ to be sharp either.

Since every cubic surface contains lines, $\sigma_2(\calR^1_{3,3}) = \bbP S^3 V$ when $\dim V = 4$. If $\dim V = 5$, the variety $\sigma_2(\calR^1_{4,3})$ has codimension $4$ in $\bbP S^3 V  = \bbP ^{34}$ and some defining equations were provided in \Cref{thm: 11 syzygies for slrk 2}. The discussion in \Cref{sec: cubics slice rank 2} shows that the equations of \Cref{thm: 11 syzygies for slrk 2} do not cut out $\sigma_2(\calR^1_{4,3})$ set-theoretically. There is, at least, one other module of equations on $\bbP S^3 V$ vanishing on $\sigma_2(\calR^{1}_{4,3})$ and not on the variety $\calS$ of hypersurfaces which are secants of a rational normal quartic. We expect these additional equation, together with those of \Cref{thm: 11 syzygies for slrk 2}, to be enough to cut out $\sigma_2(\calR^{1}_{n,3})$ in any number of variables; in particular a complete system of set-theoretic equations would arise from restrictions to five variables, similarly to the case of three variables in \Cref{thm: ruppert any vars}. More generally, we propose the following.
\begin{conjecture}\label{conj: general restrictions}
    A system of set-theoretic equations for $\sigma_r(\calR^1_{n,3})$ arises from restrictions to $2r+1$ variables. More precisely for $f \in S^3 V$, the following statements are equivalent:
      \begin{enumerate}[label=(\roman*)]
      \item $\slrk(f) \leq r$;
      \item for a generic $E \subseteq V^*$ with $\dim E \leq 2r+1$, $\slrk(f|_E) \leq r$;
      \item for every $E \subseteq V^*$ with $\dim E \leq 2r+1$, $\slrk(f|_E) \leq r$.
  \end{enumerate}
\end{conjecture}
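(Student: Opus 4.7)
The plan is to follow the blueprint of \Cref{thm:restrictions_vars}, proceeding by induction on $r$, with the base case $r=1$ supplied by \Cref{thm: ruppert any vars} (in which $2r+1=3$). The implications $(i) \Rightarrow (iii) \Rightarrow (ii)$ are clear, so the heart of the argument is $(ii) \Rightarrow (i)$. Assume $f \in S^3 V$ is concise and that every restriction $f|_E$ to a generic subspace $E$ of dimension $2r+1$ has slice rank at most $r$; the goal is to exhibit a linear subspace $L \subseteq Z(f)$ of codimension at most $r$, whence $\slrk(f) \leq r$ by \Cref{prop: slice equivalent to fano}.

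The first step is to combine \Cref{prop: singular locus} with a Bertini argument, as in the opening of the proof of \Cref{thm:restrictions_vars}, to deduce $\codim \Sing(f) \leq 2r$. Let $Y$ be an irreducible component of $\Sing(f)$ of minimal codimension $j \leq 2r$; \Cref{lem: secant of sing} yields $\sigma_2(Y) \subseteq Z(f)$, and this inclusion extends to higher secants when $f$ vanishes to higher order along $Y$. The core of the argument is a case-by-case analysis on the pair $(j, \codim \sigma_2(Y))$. For each case, one combines three tools: \Cref{prop: palatini upgraded} and its higher-dimensional analogues, to constrain the geometry of $Y$; conciseness, exploited through the inclusion $f \in I(Y)^{(2)}$, which forces $Y$ to span a sufficiently large ambient subspace; and the restriction hypothesis, used via degeneration arguments in the style of case (4.c) of \Cref{thm:restrictions_vars}, together with the determinantal equations of \Cref{thm: 11 syzygies for slrk 2} and their generalizations to higher strength from \Cref{sec: more equations}, to rule out exotic configurations such as cones over Veronese embeddings or rational normal scrolls.

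The critical case is $j = 2r$: here $\Sing(f) \cap \bbP E$ is zero-dimensional for a generic $E$ of dimension $2r+1$, and \Cref{prop: singular locus compt} supplies, for each such $E$, a linear subspace $L_E \subseteq Z(f|_E) \subseteq \bbP E$ of codimension $r$ containing a component of $\Sing(f|_E) = \Sing(f) \cap \bbP E$. The remaining task is to show that the family $\{L_E\}_E$, parameterized by generic $E \in \Gr(2r+1, V^*)$, globalizes to a single linear subspace $L \subseteq Z(f) \subseteq \bbP V^*$ of codimension $r$. A natural setting for this would be the incidence variety
\[
\calI = \bigl\{ (E, L) \in \Gr(2r+1, V^*) \times F_{n-r}\bigl(Z(f)\bigr) : \bbP L \cap \bbP E = L_E \bigr\},
\]
and the task then becomes proving that the Fano scheme $F_{n-r}(Z(f))$ is non-empty, using that the first projection of $\calI$ is dominant.

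The hard part will be twofold. First, the case analysis balloons with $r$: each new value of $j$ and of $\codim \sigma_2(Y)$ requires classifying and excluding new exceptional families (defective Severi-type varieties, higher-dimensional cones over Veronese embeddings, rational normal scrolls of various types), and each such family calls for an ad hoc degeneration argument mimicking case (4.c) of \Cref{thm:restrictions_vars}. Second, and more seriously, the globalization step in the critical case $j=2r$ is not routine: the correspondence $E \mapsto L_E$ need not be single-valued, and its monodromy could in principle prevent the existence of a global $L$; controlling this would require a deformation-theoretic analysis of the Fano scheme $F_{n-r}(Z(f))$ in the spirit of \cite{DebMan}. In view of the gap between the bound of fourteen in \Cref{thm:restrictions_vars} and the conjectural optimum of five already for $r=2$, reaching $2r+1$ in general seems to demand genuinely new techniques beyond those developed in this paper.
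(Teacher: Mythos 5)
This statement is a \emph{conjecture} in the paper: the authors prove only the case $r=1$ (via \Cref{thm: ruppert any vars}) and note that the implications $(i)\Rightarrow(iii)\Rightarrow(ii)$ are trivial; the implication $(ii)\Rightarrow(i)$ for $r\geq 2$ is left open. Even for $r=2$ the paper only establishes the much weaker bound of fourteen variables (\Cref{thm:restrictions_vars}), not the conjectured five. Your submission is therefore correctly understood as a proof \emph{plan}, not a proof, and you say as much yourself; but it should be stated plainly that nothing here closes the conjecture.

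Beyond that, there is a structural problem with the plan itself. You propose to ``follow the blueprint of \Cref{thm:restrictions_vars}'', but that proof draws essential strength from the hypothesis that restrictions to \emph{fourteen}-dimensional subspaces have slice rank $\leq 2$: the conciseness contradictions in cases (3.a), (4.a) and (4.b) need $f$ to depend on more than $9$, $14$ and $10$ variables respectively, and case (4.c) restricts to an $8$-dimensional subspace $H$ and invokes $\dim H = 8 \leq 14$ to inherit the slice-rank bound before applying the degree-$60$ equations of \Cref{thm: 11 syzygies for slrk 2}. Under the conjecture's hypothesis you only control restrictions to $2r+1 = 5$ variables, so none of these steps is available as stated; the hypothesis is strictly weaker, and the case analysis cannot simply be transplanted. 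The two obstacles you flag yourself are also real and unresolved: the higher-strength determinantal equations of \Cref{sec: more equations} are only proved to cut down the rank for $(r,d)=(2,3)$ (the paper explicitly notes the difficulty of showing $\Delta$ is independent of the Koszul syzygies in general), and the globalization of the family $\{L_E\}$ to a single codimension-$r$ linear space in $Z(f)$ --- the monodromy issue --- is exactly the kind of statement the conjecture is equivalent to, so assuming it resolvable is circular. In short: the approach is a reasonable research direction modeled on the paper's \Cref{thm:restrictions_vars}, but it does not constitute a proof, and the key steps you defer are precisely where the difficulty lies.
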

\Cref{thm: ruppert any vars} proves the conjecture in the case $r=1$ (and any degree) and clearly the implications $(i)\Rightarrow(iii)$ and $(iii)\Rightarrow(ii)$ always hold. It is possible that an analog of \Cref{conj: general restrictions} holds for higher degree as well but the non-closedness property of strength for higher degree might give rise to peculiar counterexamples. 

Finally, it would be interesting to study other geometric properties of the components of $\sigma_r(\calR^\bullet_{n,d})$. Their degree is easy to compute in the case $r=1$. The degree of the component $\sigma_r(\calR^1_{n,d})$ can be expressed as the degree of a certain Chern class over a suitable Grassmannian \cite{Man:HypersurfacesEspacesLineaires} but we do not know an explicit closed formula. The degrees of the other irreducible components of $\sigma_r(\calR^\bullet_{n,d})$ seem hard to compute. For instance, in the case $(n,d,r) = (3,4,2)$, that is for quartic surfaces of strength two, the variety $\sigma_2(\calR^\bullet_{3,4})$ has three components
\[
\sigma_2(\calR^1_{3,4}), \qquad \bfJ(\calR^1_{3,4},\calR^2_{3,4}), \qquad \sigma_2(\calR^2_{3,4}),
\]
where $\bfJ(-,-)$ denotes the geometric join \cite[Section 6.17]{Harris}. The three components are hypersurfaces. The variety $\sigma_2(\calR^1_{3,4})$ of quartic surfaces containing a line is a hypersurface of degree $320$ \cite{Man:HypersurfacesEspacesLineaires}. Therefore, its defining equation is an $\SL(V)$-invariant in $S^{320} S^4 \bbC^4$ and computing it via a direct interpolation method is far out of reach for current technology. We have numerical evidence that $\bfJ(\calR^1_{3,4},\calR^2_{3,4})$ is a hypersurface of degree $2508$, whereas the computation of $\deg \sigma_2(\calR^2_{3,4})$ was beyond what we could achieve via standard monodromy methods \cite{BreTim}. We leave further study of these varieties for future work.

\subsection*{Acknowledgments}
The authors thank Daniele Agostini, Edoardo Ballico, Alessandro Danelon, Jan Draisma, J. M. Landsberg and Giorgio Ottaviani for their helpful comments and countless suggestions. Part of this paper was written while C.F. was a research fellow at the {University of Florence}. Part of this work was developed while C.F., F.G. and A.O. were visiting the Max Planck Institute for Mathematics in the Sciences, in Leipzig, during the conference MEGA 2024: the authors thank MPI and the MEGA organizers for their hospitality and for providing a stimulating research environment. 

C.F., A.O and E.V. are members of the research group GNSAGA of INdAM. 

C.F. has been partially supported by the scientific project \textit{Multilinear Algebraic Geometry} of the program \textit{Progetti di ricerca di Rilevante Interesse Nazionale} (PRIN), Grant Assignment Decree No.~973, adopted on 06/30/2023 by the Italian Ministry of University and Research (MUR), and by the project \textit{Thematic Research Programmes}, Action I.1.5 of the program \textit{Excellence Initiative -- Research University} (IDUB) of the Polish Ministry of Science and Higher Education. A.O. and E.V. acknowledge that the preparation of this article was partially funded by the Italian Ministry of University and Research in the framework of the Call for Proposals for scrolling of final rankings of the PRIN 2022 call through the project {\it Applied Algebraic Geometry of Tensors}, Protocol no. 2022NBN7TL.

\newpage

{
\bibliographystyle{alphaurl}
\bibliography{slicerank}

\newcommand{\etalchar}[1]{$^{#1}$}
\begin{thebibliography}{vdBDG{\etalchar{+}}25}

\bibitem[AC07]{AbCh07}
A.~Abdesselam and J.~V. Chipalkatti.
\newblock {Brill–Gordan loci, transvectants and an analogue of the Foulkes
  conjecture}.
\newblock {\em Advances in Mathematics}, 208(2):491–520, 2007.
\newblock \href {https://doi.org/10.1016/j.aim.2006.03.003}
  {\path{doi:10.1016/j.aim.2006.03.003}}.

\bibitem[AH20]{AnHoc:StillmanConj}
T.~Ananyan and M.~Hochster.
\newblock {Small subalgebras of polynomial rings and Stillman’s conjecture}.
\newblock {\em J. Amer. Math. Soc.}, 33(1):291–309, 2020.
\newblock \href {https://doi.org/10.1090/jams/932}
  {\path{doi:10.1090/jams/932}}.

\bibitem[BBOV22]{BBOV:StrengthNotClosed}
E.~Ballico, A.~Bik, A.~Oneto, and E.~Ventura.
\newblock {The set of forms with bounded strength is not closed}.
\newblock {\em Comptes Rendus. Mathématique}, 360(G4):371–380, 2022.
\newblock \href {https://doi.org/10.5802/crmath.302}
  {\path{doi:10.5802/crmath.302}}.

\bibitem[BBOV23]{BBOV23}
E.~Ballico, A.~Bik, A.~Oneto, and E.~Ventura.
\newblock {Strength and slice rank of forms are generically equal}.
\newblock {\em Israel J. Math.}, 254(1):275--291, 2023.
\newblock \href {https://doi.org/10.1007/s11856-022-2397-0}
  {\path{doi:10.1007/s11856-022-2397-0}}.

\bibitem[BDDE22]{BDDE22}
A.~Bik, A.~Danelon, J.~Draisma, and R.~H. Eggermont.
\newblock {Universality of high-strength tensors}.
\newblock {\em Vietnam Journal of Mathematics}, 50(2):557–580, 2022.
\newblock \href {https://doi.org/10.1007/s10013-021-00522-7}
  {\path{doi:10.1007/s10013-021-00522-7}}.

\bibitem[BDE19]{BDK19}
A.~Bik, J.~Draisma, and R.~H. Eggermont.
\newblock {Polynomials and tensors of bounded strength}.
\newblock {\em Comm. Cont. Math.}, 21(07):1850062, 2019.
\newblock \href {https://doi.org/10.1142/S0219199718500621}
  {\path{doi:10.1142/S0219199718500621}}.

\bibitem[BDV24]{BDV24}
A.~Blatter, J.~Draisma, and E.~Ventura.
\newblock {Implicitisation and parameterisation in polynomial functors}.
\newblock {\em Found. Comput. Math.}, 24(5), 2024.
\newblock \href {https://doi.org/10.1007/s10208-023-09619-6}
  {\path{doi:10.1007/s10208-023-09619-6}}.

\bibitem[BG21]{BryGes}
T.~Brysiewicz and F.~Gesmundo.
\newblock {The Degree of Stiefel Manifolds}.
\newblock {\em Enum. Comb. Appl.}, 1(3):n.S2R20, 2021.
\newblock \href {https://doi.org/10.54550/ECA2021V1S3R20}
  {\path{doi:10.54550/ECA2021V1S3R20}}.

\bibitem[BO21]{BikOne:StrengthGeneralPoly}
A.~Bik and A.~Oneto.
\newblock {On the strength of general polynomials}.
\newblock {\em Lin. and Mult. Algebra}, page 1–27, 2021.
\newblock \href {https://doi.org/10.1080/03081087.2021.1947955}
  {\path{doi:10.1080/03081087.2021.1947955}}.

\bibitem[Bor91]{Borel:LinAlgGp}
A.~Borel.
\newblock {\em {Linear algebraic groups}}, volume 126 of {\em {Graduate Texts
  in Mathematics}}.
\newblock Springer, 1991.

\bibitem[BT18]{BreTim}
P.~Breiding and S.~Timme.
\newblock {HomotopyContinuation.jl: A package for homotopy continuation in
  Julia}.
\newblock In {\em {Mathematical Software–ICMS 2018: 6th International
  Conference, South Bend, IN, USA, July 24-27, 2018, Proceedings 6}}, page
  458–465. Springer, 2018.
\newblock \href {https://doi.org/10.1007/978-3-319-96418-8_54}
  {\path{doi:10.1007/978-3-319-96418-8_54}}.

\bibitem[CCG08]{CCG08}
E.~Carlini, L.~Chiantini, and A.~V. Geramita.
\newblock {Complete intersections on general hypersurfaces}.
\newblock {\em Michigan Math. J.}, 57:121--136, 2008.
\newblock \href {https://doi.org/10.1307/mmj/1220879400.pdf}
  {\path{doi:10.1307/mmj/1220879400.pdf}}.

\bibitem[CGG{\etalchar{+}}19]{catalisano2019secant}
M.~V. Catalisano, A.~V. Geramita, A.~Gimigliano, B.~Harbourne, J.~Migliore,
  U.~Nagel, and Y.-S. Shin.
\newblock Secant varieties of the varieties of reducible hypersurfaces in
  $\mathbb{P}^n$.
\newblock {\em J. Algebra}, 528:381--438, 2019.
\newblock \href {https://doi.org/10.1016/j.jalgebra.2019.03.014}
  {\path{doi:10.1016/j.jalgebra.2019.03.014}}.

\bibitem[CGZ23]{ChrGesZui:GapSubrank}
M.~Christandl, F.~Gesmundo, and J.~Zuiddam.
\newblock {A Gap in the Subrank of Tensors}.
\newblock {\em SIAM J. Appl. Alg. Geom.}, 7(4):742–767, 2023.
\newblock \href {https://doi.org/10.1137/22M1543276}
  {\path{doi:10.1137/22M1543276}}.

\bibitem[Chi04]{Ch04}
J.~V. Chipalkatti.
\newblock {Invariant equations defining coincident root loci}.
\newblock {\em Arch. Math. (Basel)}, 83(5):422–428, 2004.
\newblock \href {https://doi.org/10.1007/s00013-004-1191-z}
  {\path{doi:10.1007/s00013-004-1191-z}}.

\bibitem[DES17]{DerkEggSnow}
H.~Derksen, R.~Eggermont, and A.~Snowden.
\newblock {Topological noetherianity for cubic polynomials}.
\newblock {\em Algebra \& Number Theory}, 11(9):2197–2212, 2017.
\newblock \href {https://doi.org/10.2140/ant.2017.11.2197}
  {\path{doi:10.2140/ant.2017.11.2197}}.

\bibitem[DM98]{DebMan}
O.~Debarre and L.~Manivel.
\newblock {Sur la variété des espaces linéaires contenus dans une
  intersection complète}.
\newblock {\em Math. Ann}, 312:549–574, 1998.
\newblock \href {https://doi.org/10.1007/s002080050235}
  {\path{doi:10.1007/s002080050235}}.

\bibitem[Dra19]{Draisma:TopologicalNoetherianityPolyFunctors}
J.~Draisma.
\newblock {Topological Noetherianity of polynomial functors}.
\newblock {\em J. Amer. Math. Soc.}, 32(3):691–707, 2019.
\newblock \href {https://doi.org/10.1090/jams/923}
  {\path{doi:10.1090/jams/923}}.

\bibitem[Eis95]{Eis:CA}
D.~Eisenbud.
\newblock {\em {Commutative {A}lgebra: with a view toward algebraic geometry}},
  volume 150 of {\em {Graduate Texts in Mathematics}}.
\newblock Springer-Verlag, New York, 1995.

\bibitem[FH91]{FultonHarris}
W.~Fulton and J.~Harris.
\newblock {\em {Representation theory: a first course}}, volume 129 of {\em
  {Graduate Texts in Mathematics}}.
\newblock Springer-Verlag, New York, 1991.
\newblock \href {https://doi.org/10.1007/978-1-4612-0979-9}
  {\path{doi:10.1007/978-1-4612-0979-9}}.

\bibitem[Fla25]{Fla24}
C.~Flavi.
\newblock {Decompositions of powers of quadrics}.
\newblock {\em Dissertationes Mathematicae}, 602:pp. 108, 2025.
\newblock \href {https://doi.org/10.4064/dm231228-23-1}
  {\path{doi:10.4064/dm231228-23-1}}.

\bibitem[Fr{\"o}85]{Fro:Ineq}
R.~Fr{\"o}berg.
\newblock {An inequality for {H}ilbert series of graded algebras}.
\newblock {\em Math. Scand.}, 56(2):117–144, 1985.

\bibitem[GGIL22]{GGIL22}
F.~Gesmundo, P.~Ghosal, C.~Ikenmeyer, and V.~Lysikov.
\newblock {Degree-Restricted Strength Decompositions and Algebraic Branching
  Programs}.
\newblock In {\em {42nd IARCS Ann. Conf. Found. Soft. Tech. and TCS (FSTTCS
  2022)}}, volume 250 of {\em {Leibniz Int. Proc. Inf. (LIPIcs)}}, page
  20:1–20:15, Dagstuhl, Germany, 2022. Schloss Dagstuhl – Leibniz-Zentrum
  für Informatik.
\newblock \href {https://doi.org/10.4230/LIPIcs.FSTTCS.2022.20}
  {\path{doi:10.4230/LIPIcs.FSTTCS.2022.20}}.

\bibitem[GGU25]{GGU25}
F.~Gesmundo, A.~Grosdos, and A.~Uschmajew.
\newblock {Identifiability through special linear measurements}.
\newblock {\em arXiv:2505.24328}, 2025.

\bibitem[GHL25]{GHL25}
F.~Gesmundo, Y.-I. Han, and B.~Lovitz.
\newblock {Linear preservers of secant varieties and other varieties of
  tensors}.
\newblock {\em J. Symbolic Comp.}, 131:102449, 2025.
\newblock \href {https://doi.org/10.1016/j.jsc.2025.102449}
  {\path{doi:10.1016/j.jsc.2025.102449}}.

\bibitem[GKKS13]{GKKS}
A.~Gupta, P.~Kamath, N.~Kayal, and R.~Saptharishi.
\newblock {Arithmetic circuits: A chasm at depth three}.
\newblock {\em Electronic Colloquium on Computational Complexity (ECCC)},
  20:26, 2013.
\newblock \href {https://doi.org/10.1109/FOCS.2013.68}
  {\path{doi:10.1109/FOCS.2013.68}}.

\bibitem[GKT25]{GesKayTel:ChoppedIdeals}
F.~Gesmundo, L.~Kayser, and S.~Telen.
\newblock {Hilbert Functions of Chopped Ideals}.
\newblock {\em J. Algebra}, 666:415–445, 2025.
\newblock \href {https://doi.org/10.1016/j.jalgebra.2024.11.017}
  {\path{doi:10.1016/j.jalgebra.2024.11.017}}.

\bibitem[GS]{M2}
D.~R. Grayson and M.~E. Stillman.
\newblock {Macaulay2, a software system for research in algebraic geometry}.
\newblock Available at \texttt{http://www.math.uiuc.edu/Macaulay2/}.
\newblock version 1.21.

\bibitem[Gua18]{Gua18}
Y.~Guan.
\newblock {Brill's equations as a {G}{L}({V})-module}.
\newblock {\em Lin. Alg. Appl.}, 548:273–292, 2018.
\newblock \href {https://doi.org/10.1016/j.laa.2018.02.026}
  {\path{doi:10.1016/j.laa.2018.02.026}}.

\bibitem[GW09]{GW09}
Roe Goodman and Nolan~R. Wallach.
\newblock {\em {Symmetry, representations, and invariants}}, volume 255 of {\em
  {Graduate Texts in Mathematics}}.
\newblock Springer, Dordrecht, 2009.

\bibitem[Har92]{Harris}
J.~Harris.
\newblock {\em {Algebraic geometry. A first course}}, volume 133 of {\em
  {Graduate Texts in Mathematics}}.
\newblock Springer-Verlag, New York, 1992.

\bibitem[Hil86]{HilbPowers}
D.~Hilbert.
\newblock {Über die notwendigen und hinreichenden kovarianten Bedingungen für
  die Darstellbarkeit einer binären Form als vollständiger Potenz}.
\newblock {\em Mathematische Annalen}, 27:158–161, 1886.

\bibitem[Ho73]{SiuHo73}
S.~M. Ho.
\newblock On the isotropic group of a homogeneous polynomial.
\newblock {\em Trans. Amer. Math. Soc.}, 183:495--498, 1973.
\newblock \href {https://doi.org/10.2307/1996482} {\path{doi:10.2307/1996482}}.

\bibitem[Hoc73]{hochster1973criteria}
M.~Hochster.
\newblock Criteria for equality of ordinary and symbolic powers of primes.
\newblock {\em Mathematische Zeitschrift}, 133(1):53--65, 1973.
\newblock \href {https://doi.org/10.1007/BF01226242}
  {\path{doi:10.1007/BF01226242}}.

\bibitem[IL03]{IvLan:Cartan}
T.~A. Ivey and J.~M. Landsberg.
\newblock {\em {Cartan for beginners}}, volume~61 of {\em {Graduate Studies in
  Mathematics}}.
\newblock American Mathematical Society, Providence, RI, 2003.

\bibitem[Jou83]{Jou:Bertini}
J.-P. Jouanolou.
\newblock {\em {Théoremes de Bertini et applications}}, volume~42.
\newblock Birkhäuser, 1983.

\bibitem[KM02]{KogMaz}
I.~A. Kogan and M.~M. Maza.
\newblock {Computation of canonical forms for ternary cubics}.
\newblock In {\em {ISSAC '02: Proc. 2002 Int. Symp. Symbolic and Alg. Comp.}},
  page 151–160, New York, NY, USA, 2002. ACM.
\newblock \href {https://doi.org/10.1145/780506.780526}
  {\path{doi:10.1145/780506.780526}}.

\bibitem[KZ18]{KZ18}
D.~Kazhdan and T.~Ziegler.
\newblock {On ranks of polynomials}.
\newblock {\em Algebras and Representation Theory}, 21(5):1017--1021, 2018.
\newblock \href {https://doi.org/10.1007/s10468-018-9783-7}
  {\path{doi:10.1007/s10468-018-9783-7}}.

\bibitem[Lan12]{Lan12}
J.~M. Landsberg.
\newblock {\em {Tensors: {G}eometry and {A}pplications}}, volume 128 of {\em
  {Graduate Studies in Mathematics}}.
\newblock American Mathematical Society, Providence, RI, 2012.

\bibitem[Lan17]{Lan17}
J.~M. Landsberg.
\newblock {\em {Geometry and complexity theory}}, volume 169 of {\em {Cambridge
  Studies in Advanced Mathematics}}.
\newblock Cambridge University Press, Cambridge, 2017.

\bibitem[LM24]{LM24}
E.~Liwski and F.~Mohammadi.
\newblock {Paving matroids: defining equations and associated varieties}.
\newblock {\em arXiv:2403.13718}, 2024.

\bibitem[LO13]{LanOtt}
J.~M. Landsberg and G.~Ottaviani.
\newblock {Equations for secant varieties of {V}eronese and other varieties}.
\newblock {\em Ann. Mat. Pura Appl.}, 192(4):569–606, 2013.
\newblock \href {https://doi.org/10.1007/s10231-011-0238-6}
  {\path{doi:10.1007/s10231-011-0238-6}}.

\bibitem[LP01]{LP01}
C.-K. Li and S.~Pierce.
\newblock {Linear {P}reservers {P}roblems}.
\newblock {\em Amer. Math. Monthly}, 108(7):591–605, 2001.
\newblock \href {https://doi.org/10.2307/2695268} {\path{doi:10.2307/2695268}}.

\bibitem[Man99]{Man:HypersurfacesEspacesLineaires}
L.~Manivel.
\newblock {Sur les hypersurfaces contenant des espaces linéaires}.
\newblock {\em Comptes Rendus de l'Académie des Sciences - Series I -
  Mathematics}, 328(4):307–312, 1999.
\newblock \href {https://doi.org/10.1016/S0764-4442(99)80215-8}
  {\path{doi:10.1016/S0764-4442(99)80215-8}}.

\bibitem[Nas20]{Nas20}
E.~Naslund.
\newblock {The partition rank of a tensor and $k$-right corners in
  $\mathbb{F}_q^n$}.
\newblock {\em Journal of Combinatorial Theory, Series A}, 174:105190, 2020.
\newblock \href {https://doi.org/10.1016/j.jcta.2019.105190}
  {\path{doi:10.1016/j.jcta.2019.105190}}.

\bibitem[Rup86]{Rup:ReduziabilitatKurven}
W.~Ruppert.
\newblock {Reduzibilität ebener {K}urven}.
\newblock {\em J. Reine Angew. Math.}, 369:167–191, 1986.
\newblock \href {https://doi.org/10.1515/crll.1986.369.167}
  {\path{doi:10.1515/crll.1986.369.167}}.

\bibitem[Rus16]{Russo:GeometrySpecialProjVars}
F.~Russo.
\newblock {\em {On the geometry of some special projective varieties}},
  volume~18 of {\em {Lecture Notes of the Un. Mat. Ital.}}
\newblock Springer, 2016.

\bibitem[Sch00]{Schin:PolySpecialRegardRed}
A.~Schinzel.
\newblock {\em {Polynomials with special regard to reducibility}}, volume~77.
\newblock Cambridge University Press, 2000.

\bibitem[SS09]{SidSul:ProlontationCompAlgebra}
J.~Sidman and S.~Sullivant.
\newblock {Prolongations and computational algebra}.
\newblock {\em Canad. J. Math.}, 61(4):930–949, 2009.
\newblock \href {https://doi.org/10.4153/CJM-2009-047-5}
  {\path{doi:10.4153/CJM-2009-047-5}}.

\bibitem[vdBDG{\etalchar{+}}25]{vdBDGIL:Metapolynomials}
M.~van~den Berg, P.~Dutta, F.~Gesmundo, C.~Ikenmeyer, and V.~Lysikov.
\newblock {Algebraic metacomplexity and representation theory}.
\newblock In {\em 40th Comp. Compl. Conf. (CCC 2025)}, volume 339 of {\em
  Leibniz Int. Proc. Inf. (LIPIcs)}, pages 26:1--26:35. Schloss Dagstuhl --
  Leibniz-Zentrum f{\"u}r Informatik, 2025.
\newblock \href {https://doi.org/10.4230/LIPIcs.CCC.2025.26}
  {\path{doi:10.4230/LIPIcs.CCC.2025.26}}.

\bibitem[Wey03]{Wey:VB}
J.~Weyman.
\newblock {\em {Cohomology of vector bundles and syzygies}}, volume 149 of {\em
  {Cambridge Tracts in Mathematics}}.
\newblock Cambridge University Press, Cambridge, 2003.

\end{thebibliography}
}

\end{document}